\documentclass[a4paper,11pt]{article}
\usepackage[utf8]{inputenc}
\usepackage{a4wide}
\usepackage{algorithm}
\usepackage{algorithmic}
\usepackage{latexsym,amsfonts,amsmath,amssymb,mathrsfs,url,amsthm}
\usepackage{aliascnt}
\usepackage{mathtools}
\usepackage{dsfont}
\usepackage{color,graphicx}
\usepackage{lipsum}
\usepackage{subcaption}
\usepackage{hyperref}
\usepackage{cleveref}
\usepackage{xcolor}
\usepackage{placeins}

\newtheorem{theorem}{Theorem}[section]

\newaliascnt{lemma}{theorem}
\newtheorem{lemma}[lemma]{Lemma}
\aliascntresetthe{lemma}

\newaliascnt{example}{theorem}

\aliascntresetthe{example}

\newaliascnt{remark}{theorem}
\newtheorem{remark}[remark]{Remark}
\aliascntresetthe{remark}

\newaliascnt{definition}{theorem}
\newtheorem{definition}[definition]{Definition}
\aliascntresetthe{definition}

\newaliascnt{corollary}{theorem}
\newtheorem{corollary}[corollary]{Corollary}
\aliascntresetthe{corollary}

\newaliascnt{proposition}{theorem}
\newtheorem{proposition}[proposition]{Proposition}
\aliascntresetthe{proposition}

\crefname{theorem}{theorem}{theorems}
\crefname{lemma}{lemma}{lemmas}
\crefname{example}{example}{examples}
\crefname{remark}{remark}{remarks}
\crefname{definition}{definition}{definitions}
\crefname{corollary}{corollary}{corollaries}
\crefname{proposition}{proposition}{propositions}

\mathtoolsset{showonlyrefs}

\newcommand{\bsc}{\boldsymbol{c}}
\newcommand{\bsh}{\boldsymbol{h}}

\newcommand{\bsk}{\boldsymbol{k}}

\newcommand{\bsx}{\boldsymbol{x}}
\newcommand{\bsy}{\boldsymbol{y}}
\newcommand{\bsz}{\boldsymbol{z}}
\newcommand{\bsalpha}{\boldsymbol{\alpha}}
\newcommand{\bsb}{\boldsymbol{b}}

\newcommand{\PP}{\mathbb{P}}
\newcommand{\QQ}{\mathbb{Q}}
\newcommand{\RR}{\mathbb{R}}
\newcommand{\FF}{\mathbb{F}}
\newcommand{\TT}{\mathbb{T}}
\newcommand{\ZZ}{\mathbb{Z}}

\newcommand{\Ocal}{\mathcal{O}}
\newcommand{\Pcal}{\mathcal{P}}
\newcommand{\Scal}{\mathcal{S}}

\allowdisplaybreaks

\title{Algebraic constructions of point sequences with quasi-uniform two-dimensional projections\thanks{The work of T.G.\ was supported by JSPS KAKENHI Grant Number JP26K00620.}}
\author{Takashi Goda\thanks{Graduate School of Engineering, The University of Tokyo, 7-3-1 Hongo, Bunkyo-ku, Tokyo 113-8656, Japan (\url{goda@frcer.t.u-tokyo.ac.jp})}}
\date{\today}

\begin{document}

\maketitle

\begin{abstract}
    Motivated by sequential space-filling designs for computer experiments, we study algebraic constructions of extensible point sets in the $d$-dimensional unit cube whose two-dimensional coordinate projections are all quasi-uniform. Our two constructions share a common parametrization in terms of finite configurations of distinct rational directions on the projective line $\PP^1(\QQ)$. First, using a cubic number field, we construct explicit Kronecker sequences for which the mesh ratios of all two-dimensional coordinate projections remain uniformly bounded over every initial segment of length $N\ge 2$. Second, using a real quadratic field, a split prime, and a compatible $p$-adic embedding, we construct nested rank-1 lattice designs with the same uniform projection property at every nesting level. The proofs combine algebraic norm estimates with transference principles between simultaneous and dual Diophantine approximation, yielding lower bounds for the separation radii and upper bounds for the covering radii, both of optimal order in the number of points, uniformly over all coordinate pairs. We also investigate how the choice of rational projective coefficients affects the resulting mesh ratios. This leads to a minimax problem for finite configurations on $\PP^1(\QQ)$, in which one seeks to minimize the maximum mesh ratio over all two-dimensional coordinate projections. These constructions provide extensible point sets with uniformly controlled bivariate geometry.
\end{abstract}
\noindent \textbf{Keywords:} space-filling designs, Kronecker sequences, rank-1 lattice point sets, Diophantine approximation, algebraic number fields, rational projective line

\noindent \textbf{2020 Mathematics Subject Classification:} Primary 11K36, 52C15; Secondary 11J13, 11H31, 11R04


\section{Introduction}\label{sec:introduction}

Quasi-uniformity balances the covering and separation properties of a finite point set. For a finite indexed family $P\subset[0,1)^s$, let $h(P)$, $q(P)$, and $\rho(P)=h(P)/q(P)$ denote its covering radius, separation radius, and mesh ratio, respectively; precise definitions are introduced in \Cref{sec:preliminaries}. A family of $N$-point sets is \emph{quasi-uniform} when its mesh ratios remain bounded, equivalently when $h(P)\asymp q(P)\asymp N^{-1/s}$. Such sets are central in scattered data approximation, kernel methods, and space-filling experimental design; see, for example, \cite{PM12,PZ23,SW06,W05}.

In this paper, we study a projection-wise version of this property. Let $\Scal=(\bsx_n)_{n\geq0}\subset[0,1)^d$, write $P_N=(\bsx_n)_{n=0}^{N-1}$, and let $\rho_{ij}(P_N)$ be the mesh ratio of the indexed projection onto coordinates $i$ and $j$, with $\rho_{ij}(P_N)=\infty$ when a collision of two points occurs. We aim to construct explicit sequences such that
\begin{align}\label{eq:intro-projection-qu}
 \sup_{N\geq2}\max_{1\leq i<j\leq d}\rho_{ij}(P_N)<\infty,
\end{align}
and nested rank-1 lattice families satisfying the analogous condition uniformly over all nesting levels. Throughout, the projected radii are measured in the ordinary Euclidean geometry of $[0,1)^2$; periodic lattice geometry is used only as an analytic and computational tool.

The property \eqref{eq:intro-projection-qu} is not implied by quasi-uniformity in the ambient $d$-dimensional cube. For a rank-1 lattice point set $P_{N,\bsz}=\{\{n\bsz/N\} \mid 0\leq n<N\}$, its $(i,j)$-projection contains only $N/\gcd(N,z_i,z_j)$ distinct points. Even without collisions, a short vector in the projected dual lattice may confine the points to a few parallel lines. Such behavior was observed in our preceding work on full-dimensional space-filling lattice designs \cite{SG26}. 

From the viewpoint of computer experiments, space-filling designs are useful when an expensive deterministic simulator must be explored with a limited number of runs; see, for example, \cite{FLS06,JMY90,MM95,SWN03}. Projection-wise space-filling is particularly relevant when the response has low effective dimension: only a few input variables, or low-order interactions among them, may have a substantial influence on the response. Since the active variables are typically not known in advance, a design that is well spread in the full $d$-dimensional cube but has poor low-dimensional projections can be undesirable. This observation has motivated maximum projection designs and lattice-based designs with favorable projection properties \cite{H21,JGB15}. Closely related concerns also arise in quasi-Monte Carlo (QMC) constructions, where the quality of two-dimensional projections can strongly depend on the generating parameters \cite{JK08}. The present paper seeks explicit extensible constructions ensuring quasi-uniformity on every bivariate projection.

We further remark that extensibility of point sets is desirable in practice, particularly when the computational budget is not fixed in advance, as additional runs can be added without discarding previously evaluated design points. Nested (or sequential) space-filling designs for computer experiments and extensible lattice rules for QMC have also been studied; see, for instance, \cite{RHVD10} and \cite{DPW08}, respectively.

The two constructions we propose in this paper are parametrized in terms of a finite configuration of distinct rational directions
\[
 V=(v_1,\ldots,v_d),\quad v_j=[a_j:b_j]\in\PP^1(\QQ)=\left(\QQ^2\setminus\{(0,0)\}\right)/\QQ^\times,
\]
where $(a_j,b_j)$ is a fixed primitive integer representative. Distinctness is equivalent to $a_i b_j-a_j b_i\neq0$ for $i\neq j$. The same configuration is inserted into a cubic-field construction through $\alpha_j=a_j\theta+b_j\theta^2$ and into a quadratic-field construction through $\beta_j=a_j+b_j\omega$.

Our contributions are as follows.
\begin{enumerate}
\item \emph{Algebraic Kronecker sequences.} Let $K=\QQ(\theta)\subset\RR$ be a cubic number field, with $\theta$ an algebraic integer, and set $\alpha_j=a_j\theta+b_j\theta^2$. For every $i<j$, the elements $1,\alpha_i,\alpha_j$ form a $\QQ$-basis of $K$. An algebraic norm estimate gives dual bad approximability, and Khintchine transference together with the characterization in \cite{DGLPS25} yields $q_{ij}(P_N)\asymp h_{ij}(P_N)\asymp N^{-1/2}$ uniformly over $N\geq2$ and $i<j$. Thus, every bivariate projection of the resulting Kronecker sequence is quasi-uniform.

\item \emph{Nested rank-1 lattice sequences.} Let $L=\QQ(\omega)$ be real quadratic, let $p$ split in $L$, and choose a prime ideal $\mathfrak p$ above $p$ and the corresponding embedding $\iota_{\mathfrak p}:\Ocal_L\to\ZZ_p$. For $\beta_j=a_j+b_j\omega$ that are units at $\mathfrak p$, define $z_{m,j}\equiv\iota_{\mathfrak p}(\beta_j)\pmod{p^m}$. The rank-1 lattice point sets with $N=p^m$ and $\bsz=(z_{m,1},\ldots,z_{m,d})$ form a nested family as $m$ varies. A dual congruence at level $m$ forces $p^m$ to divide $N_{L/\QQ}(h_1\beta_i+h_2\beta_j)$, which gives a lower bound of order $p^{m/2}$ for every projected dual minimum. Planar lattice geometry and a boundary-transfer estimate then imply $q_{ij}(P_m)\asymp h_{ij}(P_m)\asymp p^{-m/2}$ uniformly over $m$ and $i<j$, ensuring projection-wise quasi-uniformity.

\item \emph{Extremal projective configurations.} Distinct directions guarantee bounded mesh ratios but not small constants. We therefore formulate a finite minimax problem on bounded-height subsets of $\PP^1(\QQ)$. For the nested lattice construction, each periodic projected mesh ratio is obtained exactly by Gauss reduction of a planar dual lattice; for the Kronecker construction, algebraic and truncated Diophantine criteria are used to screen candidates before direct geometric evaluation. This separates an all-level algebraic certificate from the quantitative optimization of the observed mesh ratios.
\end{enumerate}

Both constructions are intrinsically bivariate. For $d>2$, the cubic coordinates lie in $\QQ\theta+\QQ\theta^2$, while the quadratic coefficients lie in the two-dimensional space $L$; hence higher-dimensional projections need not be uniformly distributed. The fixed algebraic degree is precisely what permits arbitrary ambient dimension while retaining non-degeneracy for every coordinate pair.

The rest of this paper is organized as follows. \Cref{sec:preliminaries} collects the geometric, lattice, projective, and Diophantine preliminaries. The cubic and quadratic constructions are developed in \Cref{sec:kronecker,sec:nested}. \Cref{sec:optimization} formulates the extremal coefficient problem, \Cref{sec:numerics} reports the resulting computations, and \Cref{sec:conclusion} concludes the paper.


\section{Preliminaries}\label{sec:preliminaries}

We write $\|\cdot\|_2$ and $\|\cdot\|_\infty$ for the Euclidean and maximum norms, $\|x\|=\min_{k\in\ZZ}|x-k|$, and denote by $\{\cdot\}$ fractional parts componentwise. The torus $\TT^s=\RR^s/\ZZ^s$ is identified with $[0,1)^s$ and endowed with $d_{\TT^s}(\bsx,\bsy)=\min_{\bsk\in\ZZ^s}\|\bsx-\bsy-\bsk\|_2$. Implicit constants may depend on fixed algebraic data and on $d$, but not on the number of points or the nesting level.

\subsection{Euclidean and periodic quasi-uniformity}

Let $\Pcal=(\bsx_n)_{n=0}^{N-1}\subset[0,1)^s$ be an indexed family, so that collisions are retained. Define
\begin{align*}
 h(\Pcal)\coloneqq \sup_{\bsx\in[0,1)^s}\min_{0\le n<N}\|\bsx-\bsx_n\|_2,\quad 
 q(\Pcal)\coloneqq \frac{1}{2}\min_{0\le m<n<N}\|\bsx_m-\bsx_n\|_2,\quad
 \rho(\Pcal)\coloneqq \frac{h(\Pcal)}{q(\Pcal)}.
\end{align*}
We set $q(\Pcal)=0$ and $\rho(\Pcal)=\infty$ when two labeled points coincide. The periodic quantities $h_{\rm per}$, $q_{\rm per}$, and $\rho_{\rm per}$ are defined by replacing $\|\bsx-\bsy\|_2$ with $d_{\TT^s}(\bsx,\bsy)$. A sequence of finite families is quasi-uniform if its Euclidean mesh ratios are bounded.

\begin{lemma}\label{lem:boundary-transfer}
For every finite indexed family $\Pcal\subset[0,1)^s$,
\begin{align}\label{eq:boundary-transfer-radii}
 q_{\rm per}(\Pcal)\le q(\Pcal),\quad
 h_{\rm per}(\Pcal)\le h(\Pcal)\le(1+\sqrt{s})h_{\rm per}(\Pcal).
\end{align}
Consequently, $\rho(\Pcal)\le(1+\sqrt{s})\rho_{\rm per}(\Pcal)$ whenever $q_{\rm per}(\Pcal)>0$.
\end{lemma}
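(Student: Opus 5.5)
The first three inequalities follow from $d_{\TT^s}(\bsx,\bsy)\le\|\bsx-\bsy\|_2$ (take $\bsk=0$ in the minimum). Applying this pairwise to the labeled points gives $q_{\rm per}(\Pcal)\le q(\Pcal)$. Applying it to $\bsx$ and each $\bsx_n$ gives $\min_n d_{\TT^s}(\bsx,\bsx_n)\le\min_n\|\bsx-\bsx_n\|_2$; taking the supremum over $\bsx\in[0,1)^s$ yields $h_{\rm per}(\Pcal)\le h(\Pcal)$. The mesh-ratio consequence is then immediate once $h(\Pcal)\le(1+\sqrt{s})h_{\rm per}(\Pcal)$ is known, since $\rho=h/q\le(1+\sqrt s)h_{\rm per}/q_{\rm per}$ when $q_{\rm per}>0$.

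The substance is the upper bound $h(\Pcal)\le(1+\sqrt{s})h_{\rm per}(\Pcal)$. Write $r=h_{\rm per}(\Pcal)$ and fix $\bsx\in[0,1)^s$. I split into two cases.

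\emph{Case $r\ge 1/2$.} Every point of $[0,1)^s$ lies within Euclidean distance $\sqrt{s}$ of any $\bsx_n$, so $h(\Pcal)\le\sqrt s\le 2\sqrt s\,r$. This falls short of $(1+\sqrt s)r$ when $s>1$, so I will sharpen the case split in the next case.

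\emph{Case $r$ small.} Choose $\bsy\in\bar B(\bsx,\sqrt{s}\,r)\cap[0,1)^s$ that is pushed inward: in each coordinate, move $x_\ell$ toward $1/2$ by $\min(r,|x_\ell-1/2|)$. Then $\|\bsx-\bsy\|_2\le\sqrt s\,r$. The key claim is that, for this $\bsy$, the closest periodic image of a nearest point $\bsx_n$ is unwrapped, so that $\|\bsy-\bsx_n\|_2=d_{\TT^s}(\bsy,\bsx_n)\le r$. If $y_\ell\in[r,1-r]$ and $|y_\ell-x_{n,\ell}-k_\ell|\le r$ with $x_{n,\ell}\in[0,1)$, then $k_\ell=0$ is forced. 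This requires $r<1/2$ so that the pushed coordinate lands in $[r,1-r]$; when $r$ is near $1/2$ the pushed point is the centre and the claim still holds. The triangle inequality then gives $\min_n\|\bsx-\bsx_n\|_2\le\sqrt s\,r+r$, and a supremum over $\bsx$ gives the result.

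The main obstacle is covering all $r$ uniformly, including showing that the push keeps $\bsy$ in $[r,1-r]^s$ at distance at most $\sqrt s\,r$. When $r\ge1/2$, the centre $\bsy=(1/2,\dots,1/2)$ satisfies $\|\bsx-\bsy\|_2\le\sqrt s/2\le\sqrt s\,r$. Moreover, any torus displacement from the centre with each coordinate of modulus at most $1/2$ is unwrapped, using a representative of $\bsx_n$ in $[0,1)$ with ties handled by the closed inequality. So the same triangle-inequality argument applies, and this single construction handles both cases.
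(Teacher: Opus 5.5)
Your argument is essentially the paper's: you move $\bsx$ by at most $r=h_{\rm per}(\Pcal)$ in each coordinate to a point $\bsy$ at which the Euclidean and periodic distances to a suitable point of $\Pcal$ agree, then apply the triangle inequality, with the centre $(1/2,\ldots,1/2)$ handling $r\ge 1/2$. There is one boundary slip: the claim that $y_\ell\in[r,1-r]$ forces $k_\ell=0$ is false at $y_\ell=1-r$ (take $x_{n,\ell}=0$, $k_\ell=1$). The conclusion still holds, because for $r<1/2$ your push gives $y_\ell\in[r,1-r)$, since $x_\ell<1$. Hence $[y_\ell-r,y_\ell+r]\subset[0,1)$, which does force $k_\ell=0$. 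The paper avoids the issue instead by clipping to $[r+\varepsilon,1-r-\varepsilon]$ and letting $\varepsilon\downarrow 0$.
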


\begin{proof}
The first two inequalities follow immediately from $d_{\TT^s}\le\|\cdot\|_2$. Thus, only the last displayed inequality requires proof. Put $r=h_{\rm per}(\Pcal)$. If $r<1/2$, fix $\varepsilon>0$ so that $r+\varepsilon <1/2$ and clip an arbitrary $\bsx\in[0,1)^s$ coordinatewise to $[r+\varepsilon,1-r-\varepsilon]$. The clipped point moves by at most $\sqrt{s}(r+\varepsilon)$, and any periodic covering point within distance $r$ has a representative in the original cube $[0,1)^s$. Hence its Euclidean distance from $\Pcal$ is at most $(1+\sqrt{s})r+\sqrt{s}\varepsilon$. Letting $\varepsilon \downarrow 0$ proves the desired estimate in this case. If $r\ge1/2$, choose a periodic covering point for $\bsc=(1/2,\ldots,1/2)$. Then, its periodic distance from every point of $[0,1)^s$ agrees with the Euclidean distance. By using the triangle inequality, we obtain
\[ \min_{0\le n<N}\|\bsx-\bsx_n\|_2\le \|\bsx-\bsc\|_2+r\le \frac{\sqrt{s}}{2}+r\le (1+\sqrt{s})r, \]
for any $\bsx\in [0,1)^s$. This completes the proof of the last displayed inequality.
\end{proof}

For a coordinate pair $i<j$, let $\Pcal^{(i,j)}$ be the indexed projection and write $h_{ij}(\Pcal)=h(\Pcal^{(i,j)})$, $q_{ij}(\Pcal)=q(\Pcal^{(i,j)})$, and $\rho_{ij}(\Pcal)=\rho(\Pcal^{(i,j)})$, with analogous periodic notation.

\subsection{Rank-1 lattices and planar reduction}\label{subsec:planar-reduction}

For a full-rank lattice $\Lambda\subset\RR^s$, let $\Lambda^\perp$, $\det(\Lambda)$, $\lambda_1(\Lambda)$, and $\mu(\Lambda)$ denote its dual, determinant, first minimum, and covering radius. If $\ZZ^s\subset\Lambda$ and $P(\Lambda)=\Lambda \cap [0,1)^s$, then $h_{\rm per}(P(\Lambda))=\mu(\Lambda)$ and $q(P(\Lambda))\ge\lambda_1(\Lambda)/2$; thus \Cref{lem:boundary-transfer} converts periodic lattice bounds into Euclidean bounds.

For $N\ge2$ and $\bsz\in\ZZ^s$, set
\[
 \Lambda_{N,\bsz}=\ZZ^s+\frac{\bsz}{N}\ZZ,
 \quad
 P_{N,\bsz}=\Lambda_{N,\bsz} \cap [0,1)^s=\left\{\left\{\frac{n\bsz}{N}\right\}\mid 0\le n<N\right\}.
\]
Here, $P_{N,\bsz}$ is called a \emph{rank-1 lattice point set with generating vector $\bsz$} consisting of $M=N/\gcd(N,z_1,\ldots,z_s)$ distinct points in $[0,1)^s$, and 
\begin{align}\label{eq:rank-one-determinant}
 \det(\Lambda_{N,\bsz})=M^{-1}.
\end{align}
Moreover, $\Lambda_{N,\bsz}^{\perp}=\{\bsh\in\ZZ^s:\bsh\cdot\bsz\equiv0\pmod N\}$. In dimension two, if $z_1$ is invertible modulo $N$, then $P_{N,(z_1,z_2)}=P_{N,(1,a)}$ after reindexing, where $a\equiv z_2z_1^{-1}\pmod N$. Its dual congruence lattice and the corresponding generator matrix are given by
\begin{align}\label{eq:dual-congruence-lattice}
 \Gamma_{N,a}=\left\{(h_1,h_2)\in\ZZ^2 \mid h_1+ah_2\equiv0\pmod N\right\},
 \quad
 B_{N,a}=\begin{pmatrix}N&-a\\0&1\end{pmatrix},
\end{align}
respectively, with determinant $N$.

A basis $(\bsb_1,\bsb_2)$ of a planar lattice is \emph{Gauss reduced} if
\begin{align}\label{eq:gauss-reduced}
 \|\bsb_1\|_2\le\|\bsb_2\|_2,
 \quad 2|\bsb_1\cdot\bsb_2|\le\|\bsb_1\|_2^2.
\end{align}
We choose the sign of $\bsb_2$ so that $0\le2\bsb_1\cdot\bsb_2\le\|\bsb_1\|_2^2$. Then $\bsb_1$ is shortest, the triangle $(0,\bsb_1,\bsb_2)$ is Delaunay, and its circumradius gives the covering radius.

\begin{proposition}\label{prop:exact-periodic-mesh-ratio}
Let $(\bsb_1,\bsb_2)$ be a Gauss-reduced basis of $\Gamma_{N,a}$ with the preceding sign convention. Then, we have
\begin{align}
 q_{\rm per}(P_{N,(1,a)})&=\frac{\|\bsb_1\|_2}{2N},\label{eq:exact-periodic-separation}\\
 h_{\rm per}(P_{N,(1,a)})&=\frac{\|\bsb_1\|_2\|\bsb_2\|_2\|\bsb_2-\bsb_1\|_2}{2N^2},\label{eq:exact-periodic-covering}\\
 \rho_{\rm per}(P_{N,(1,a)})&=\frac{\|\bsb_2\|_2\|\bsb_2-\bsb_1\|_2}{N}.\label{eq:exact-periodic-mesh}
\end{align}
\end{proposition}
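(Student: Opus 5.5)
The plan is to avoid working with the primal lattice directly and instead use the planar duality between $\Lambda_{N,(1,a)}$ and $\Gamma_{N,a}$. In dimension two, if $J$ denotes rotation by $90^\circ$, then for any full-rank lattice $\Gamma$ with basis matrix $B$ one has $B^{-T}=JBJ^{-1}/\det B$. Hence the dual lattice is $\Gamma^\perp=\det(\Gamma)^{-1}J\Gamma$.

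Since $\Lambda_{N,(1,a)}^\perp=\Gamma_{N,a}$ and $\det\Gamma_{N,a}=N$, I will deduce
\[
\Lambda_{N,(1,a)}=\Gamma_{N,a}^\perp=\tfrac1N J\,\Gamma_{N,a}.
\]
So the primal lattice is a rotated copy of the dual congruence lattice, scaled by $1/N$. Rotations and scalings preserve all metric data up to the factor $1/N$. Therefore $\lambda_1(\Lambda)=\|\bsb_1\|_2/N$ and $\mu(\Lambda)=\mu(\Gamma_{N,a})/N$.

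For the separation radius, the first coordinates $n/N$, $0\le n<N$, are distinct, so $P_{N,(1,a)}$ has $N$ distinct points. The periodic distance between two of them equals the length of a nonzero vector of $\Lambda$, and every shortest vector arises as such a difference. This gives $q_{\rm per}=\lambda_1(\Lambda)/2=\|\bsb_1\|_2/(2N)$, using that $\bsb_1$ is a shortest vector of a Gauss-reduced basis. For the covering radius, the preliminaries give $h_{\rm per}(P(\Lambda))=\mu(\Lambda)$, so it remains to compute $\mu(\Gamma_{N,a})$.

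The step I expect to need the most care is justifying that the covering radius equals the circumradius of the triangle $(0,\bsb_1,\bsb_2)$. I would check that this triangle is non-obtuse using the sign convention and Gauss reduction:
\begin{itemize}
\item the angle at $0$ has cosine proportional to $\bsb_1\cdot\bsb_2\ge0$;
\item the angle at $\bsb_1$ has $(-\bsb_1)\cdot(\bsb_2-\bsb_1)=\|\bsb_1\|_2^2-\bsb_1\cdot\bsb_2\ge0$, because $2\bsb_1\cdot\bsb_2\le\|\bsb_1\|_2^2$;
\item the angle at $\bsb_2$ has $\|\bsb_2\|_2^2-\bsb_1\cdot\bsb_2\ge\|\bsb_1\|_2^2-\bsb_1\cdot\bsb_2\ge0$.
\end{itemize}

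The lattice translates of this triangle and of its point reflection $(\bsb_1+\bsb_2)-(0,\bsb_1,\bsb_2)$ tile the plane. The reflected triangle is congruent, so it has the same circumradius. For a non-obtuse triangle, the circumcenter lies in the closed triangle. Every point of the triangle is within the circumradius of some vertex, and the circumcenter is at exactly that distance from all three vertices. The remaining point is that no other lattice point comes closer to the circumcenter. This is the empty-circle (Delaunay) property, which holds for non-obtuse triangles in a reduced basis. Hence $\mu(\Gamma_{N,a})=R$.

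Using $R=abc/(4\,\mathrm{Area})$ with $\mathrm{Area}=\det(\Gamma_{N,a})/2=N/2$ gives
\[
R=\frac{\|\bsb_1\|_2\|\bsb_2\|_2\|\bsb_2-\bsb_1\|_2}{2N}.
\]
Dividing by $N$ yields \eqref{eq:exact-periodic-covering}. Taking the quotient of \eqref{eq:exact-periodic-covering} by \eqref{eq:exact-periodic-separation} gives \eqref{eq:exact-periodic-mesh}.
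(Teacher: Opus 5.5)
Your proposal follows the paper's own route. It uses the planar self-duality $\Lambda_{N,(1,a)}=N^{-1}J\Gamma_{N,a}$, then the shortest vector $\bsb_1$ for $q_{\rm per}$, then the circumradius of the Delaunay triangle $(0,\bsb_1,\bsb_2)$ with area $N/2$ for $h_{\rm per}$. Your non-obtuseness check and tiling argument give more detail than the paper on the covering bound. For the empty-circle property you rely on the standard Delaunay fact, exactly as the paper does.

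The one step you assert without justification is that every shortest vector of $\Lambda_{N,(1,a)}$ is a difference of two points of $P_{N,(1,a)}$ modulo $\ZZ^2$. This requires that no shortest vector of $\Lambda_{N,(1,a)}$ lies in $\ZZ^2$, i.e.\ $\lambda_1(\Lambda_{N,(1,a)})<1$. Otherwise integer vectors, which give zero torus distance, could be the minimizers, and then $q_{\rm per}$ would exceed $\lambda_1/2$.

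The paper closes this with the planar Hermite bound, since $\det\Lambda_{N,(1,a)}=1/N\le1/2$. Alternatively, reduce $(1/N,a/N)$ modulo $\ZZ^2$ so that its second coordinate lies in $[-1/2,1/2]$. The result is a non-integer lattice vector, because $N\ge2$, of length at most $1/\sqrt2<1$. Add this line and the argument is complete.
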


\begin{proof}
In dimension two, a lattice is similar to its dual. Indeed, if $\Gamma=B\ZZ^2$ and $J$ denotes rotation through $\pi/2$, then $B^{-T} = (\det B)^{-1}JBJ^T$. Since $J^T\ZZ^2=\ZZ^2$, this gives, up to orientation, $\Gamma^{\perp}=\det(\Gamma)^{-1}J\Gamma$. As $\Gamma_{N,a}=\Lambda^{\perp}_{N,(1,a)}$ and $\det(\Gamma_{N,a})=N$, it follows that $\Lambda_{N,(1,a)}$ is isometric to $N^{-1}\Gamma_{N,a}$. Since $\det(\Lambda_{N,(1,a)})=1/N$ and $N\ge2$, the planar Hermite bound gives $\lambda_1(\Lambda_{N,(1,a)})<1$. Hence, a shortest nonzero vector is not an integer vector and represents a nonzero difference in the quotient $\Lambda_{N,(1,a)}/\mathbb Z^2$. The first formula follows from the shortest vector, while the covering radius of $\Gamma_{N,a}$ is the circumradius of the Delaunay triangle $(0,\bsb_1,\bsb_2)$, namely $\|\bsb_1\|_2\|\bsb_2\|_2\|\bsb_2-\bsb_1\|_2/(2N)$.
\end{proof}

\begin{corollary}\label{cor:dual-minimum-quasi-uniformity}
If $\lambda_1(\Gamma_{N,a})\ge c\sqrt N$ for some $c>0$, then
\[
 q(P_{N,(1,a)})\ge\frac{c}{2}N^{-1/2},\quad
 h(P_{N,(1,a)})\le C(c)N^{-1/2},
\]
and hence $\rho(P_{N,(1,a)})$ is bounded in terms of $c$ only.
\end{corollary}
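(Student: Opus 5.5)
We combine the exact periodic formulas of \Cref{prop:exact-periodic-mesh-ratio} with the boundary-transfer estimate of \Cref{lem:boundary-transfer} for $s=2$. Let $(\bsb_1,\bsb_2)$ be a Gauss-reduced basis of $\Gamma_{N,a}$ with the stated sign convention. Since $\bsb_1$ is a shortest nonzero vector of $\Gamma_{N,a}$, the hypothesis gives $\|\bsb_1\|_2=\lambda_1(\Gamma_{N,a})\ge c\sqrt N$.

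\emph{Separation.} By \eqref{eq:exact-periodic-separation}, $q_{\rm per}(P_{N,(1,a)})=\|\bsb_1\|_2/(2N)\ge \frac c2 N^{-1/2}$. \Cref{lem:boundary-transfer} gives $q\ge q_{\rm per}$, which yields the first inequality. This also shows $q_{\rm per}>0$, so there are no collisions.

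\emph{Covering.} We bound $\|\bsb_2\|_2$ and $\|\bsb_2-\bsb_1\|_2$ from above. Since $\det(\Gamma_{N,a})=N$ and the basis is Gauss reduced, the angle $\phi$ between $\bsb_1$ and $\bsb_2$ satisfies $|\cos\phi|\le \|\bsb_1\|_2/(2\|\bsb_2\|_2)\le 1/2$. Hence $\sin\phi\ge\sqrt3/2$, and
\[
N=\|\bsb_1\|_2\|\bsb_2\|_2\sin\phi\ge\tfrac{\sqrt3}{2}\|\bsb_1\|_2\|\bsb_2\|_2 .
\]
Therefore $\|\bsb_2\|_2\le \frac{2N}{\sqrt3\,\|\bsb_1\|_2}\le\frac{2}{\sqrt3\,c}\sqrt N$. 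Also $\|\bsb_2-\bsb_1\|_2\le\|\bsb_2\|_2$, because $0\le 2\bsb_1\cdot\bsb_2\le\|\bsb_1\|_2^2$ gives $\|\bsb_2-\bsb_1\|_2^2=\|\bsb_2\|_2^2-2\bsb_1\cdot\bsb_2+\|\bsb_1\|_2^2\le\|\bsb_2\|_2^2+\|\bsb_1\|_2^2-2\bsb_1\cdot\bsb_2$. One can instead use the cruder bound $\le 2\|\bsb_2\|_2$, which suffices.

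Substituting into \eqref{eq:exact-periodic-mesh} gives
\[
\rho_{\rm per}\le \frac{4\|\bsb_2\|_2^2}{N}\le \frac{16}{3c^2}.
\]
Then
\[
h_{\rm per}=\rho_{\rm per}\,q_{\rm per}\le \frac{16}{3c^2}\cdot\frac{\|\bsb_1\|_2}{2N}.
\]
To get an $N^{-1/2}$ bound we need $\|\bsb_1\|_2\le\|\bsb_2\|_2\le \frac{2}{\sqrt3 c}\sqrt N$. This follows from the reduction ordering, or alternatively from Hermite's bound $\|\bsb_1\|_2\le (4/3)^{1/4}\sqrt N$. Thus $h_{\rm per}\le C'(c)N^{-1/2}$. \Cref{lem:boundary-transfer} with $s=2$ then gives $h\le(1+\sqrt2)h_{\rm per}\le C(c)N^{-1/2}$.

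\emph{Mesh ratio.} Either divide the two Euclidean bounds, or use $\rho\le(1+\sqrt2)\rho_{\rm per}\le (1+\sqrt2)\frac{16}{3c^2}$ directly from \Cref{lem:boundary-transfer}, which applies since $q_{\rm per}>0$.

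The only step with real content is the upper bound on $\|\bsb_2\|_2$. It rests on the fact that a Gauss-reduced basis is nearly orthogonal, so the determinant identity forces $\|\bsb_2\|_2\asymp N/\|\bsb_1\|_2$. All remaining steps are bookkeeping with constants.
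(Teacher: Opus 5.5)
Your argument is correct and follows the paper's proof essentially step for step. Both use $q\ge q_{\rm per}=\lambda_1(\Gamma_{N,a})/(2N)$, then $\sin\angle(\bsb_1,\bsb_2)\ge\sqrt3/2$ together with $\det\Gamma_{N,a}=N$ to get $\|\bsb_2\|_2\le 2\sqrt N/(\sqrt3\,c)$, Hermite (or $\|\bsb_1\|_2\le\|\bsb_2\|_2$) for $\bsb_1$, the triangle inequality for the third side, and \Cref{lem:boundary-transfer} with $s=2$.

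One correction: your claim $\|\bsb_2-\bsb_1\|_2\le\|\bsb_2\|_2$ is false in general, and your displayed justification is only an identity. Under the sign convention $0\le 2\bsb_1\cdot\bsb_2\le\|\bsb_1\|_2^2$ one has $\|\bsb_2-\bsb_1\|_2^2-\|\bsb_2\|_2^2=\|\bsb_1\|_2^2-2\bsb_1\cdot\bsb_2\ge0$, so the inequality actually goes the other way. The proof must therefore rest on your fallback $\|\bsb_2-\bsb_1\|_2\le\|\bsb_1\|_2+\|\bsb_2\|_2\le2\|\bsb_2\|_2$, which is exactly the paper's triangle-inequality step.
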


\begin{proof}
The lower bound on $q(P_{N,(1,a)})$ follows immediately from 
\[ q(P_{N,(1,a)})\ge q_{\rm per}(P_{N,(1,a)})= \frac{\lambda_1(\Gamma_{N,a})}{2N}\ge \frac{c}{2}N^{-1/2}.\]
Let us consider $h(P_{N,(1,a)})$. For a Gauss-reduced basis, the planar Hermite bound gives $\|\bsb_1\|_2\le\eta\sqrt N$ with $\eta=(2/\sqrt3)^{1/2}$, while $\sin\angle(\bsb_1,\bsb_2)\ge\sqrt3/2$ and $\det\Gamma_{N,a}=N$ give $\|\bsb_2\|_2\le A\sqrt N$ with $A=2/(\sqrt3c)$. It follows from the triangle inequality that
\[ \|\bsb_2-\bsb_1\|_2 \le \|\bsb_2\|_2 +\|\bsb_1\|_2\le (A+\eta)\sqrt{N}. \]
Substituting these bounds into \eqref{eq:exact-periodic-covering} and then applying \Cref{lem:boundary-transfer} completes the proof. For example, one may take $C(c)=(1+\sqrt2)\eta A(\eta+A)/2$.
\end{proof}

\subsection{Projective coefficients and Diophantine approximation}

The rational projective line is $\PP^1(\QQ)=(\QQ^2\setminus\{0\})/\QQ^\times$. Every $v=[a:b]$ has a primitive integer representative, unique up to sign; we choose the one satisfying $a>0$, or $(a,b)=(0,1)$, and define the height by
\begin{align}\label{eq:projective-height}
 H(v)\coloneqq \max\{|a|,|b|\}.
\end{align}

\begin{definition}\label{def:projective-configuration}
A rational projective coefficient configuration of size $d$ is an ordered tuple $V=(v_1,\ldots,v_d)$ of distinct points $v_j=[a_j:b_j]\in\PP^1(\QQ)$, represented as above. Thus
\begin{align}\label{eq:projective-distinctness-determinant}
 a_i b_j-a_j b_i\neq0\quad(i\neq j).
\end{align}
Its height is $H(V)=\max_j H(v_j)$, and $\mathcal R_B=\{v\in\PP^1(\QQ):H(v)\le B\}$ will be our finite candidate set.
\end{definition}

For $\bsalpha\in\RR^s$, define
\begin{align}
 c_{\rm sim}(\bsalpha) \coloneqq \inf_{q\ge1}q^{1/s}\max_j\|q\alpha_j\|,\quad 
 c_{\rm dual}(\bsalpha) \coloneqq \inf_{\bsh\in\ZZ^s\setminus\{0\}}\|\bsh\|_\infty^s\|\bsh\cdot\bsalpha\|.
\end{align}
The following equivalence follows from the classical transference theorem for two homogeneous problems; see \cite[Chapter~V, Corollary to Theorem~II]{C57}. The proof there is quantitative: a positive lower bound for either constant yields a positive lower bound for the other depending only on that constant and on s.
\begin{theorem}\label{thm:khintchine-transference}
For every $\bsalpha\in\RR^s$, one has $c_{\rm sim}(\bsalpha)>0$ if and only if $c_{\rm dual}(\bsalpha)>0$.
\end{theorem}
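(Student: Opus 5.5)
The plan is to prove the classical Khintchine transference in both directions with Minkowski's linear forms theorem (first theorem on convex bodies), keeping track of constants so that a lower bound for one constant gives an explicit lower bound for the other depending only on $s$. Throughout, the key device is: if some convex body built from $\bsalpha$ contains no nonzero integer point, its volume is at most $2^{s+1}$; contrapositively, once we suppose a very good approximation of one kind exists, a suitably chosen body has large volume and contains an integer point that is a very good approximation of the other kind.

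\emph{Direction $c_{\rm dual}(\bsalpha)>0\Rightarrow c_{\rm sim}(\bsalpha)>0$.} Suppose $c_{\rm dual}(\bsalpha)\ge\gamma>0$, and suppose for contradiction that some $q\ge1$ and integers $p_j$ satisfy $|q\alpha_j-p_j|\le\varepsilon q^{-1/s}$ for all $j$, with $\varepsilon$ small. For $\bsh\in\ZZ^s\setminus\{0\}$ and $h_0\in\ZZ$, write
\[
 q(\bsh\cdot\bsalpha+h_0)=\sum_j h_j(q\alpha_j-p_j)+\Big(\sum_j h_jp_j+qh_0\Big).
\]
The second bracket is an integer. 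Apply Minkowski to the $(s+1)$-dimensional body in $(h_0,\bsh)$ given by $|h_j|\le H$ for $1\le j\le s$ and $|\bsh\cdot\bsalpha+h_0|\le\delta$, with volume $2^{s+1}H^s\delta$. Choosing $H^s\delta=1$ produces $\bsh\neq0$ (for $\delta<1$) with $\|\bsh\cdot\bsalpha\|\le\delta$. The dual hypothesis then gives $\delta\ge\gamma H^{-s}$, which is consistent with this choice, so this bare count is not enough. The standard fix is Cassels' (or Mahler's) argument: consider the $(s+1)$ linear forms $q\alpha_j-p_j$ ($1\le j\le s$) and $q$ in the variables $(q,\boldsymbol p)$, and their transposed system $\bsh\cdot\bsalpha+h_0$, $h_j$. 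Mahler's transference lemma for a pair of dual systems of linear forms then states that a small solution of the first system forces a solution of the second with product of bounds $\ll_s$ the original product. Concretely, the point $(q,\boldsymbol p)$ is a short vector of the lattice $A\ZZ^{s+1}$, with $A$ the matrix of the simultaneous system rescaled by the diagonal weights $(q^{-1/s}\varepsilon)^{-1}$ and $Q^{-1}$. By Minkowski's second theorem applied to this lattice and its dual ($\lambda_1(\Lambda)\lambda_{s+1}(\Lambda^\perp)\ll_s 1$ together with $\lambda_1\cdots\lambda_{s+1}\asymp_s\det$), the dual lattice has a vector whose weighted sup-norm is $\ll_s\varepsilon^{1/s}$. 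Unwinding the weights gives $\bsh\neq0$ with $\|\bsh\|_\infty^s\|\bsh\cdot\bsalpha\|\ll_s\varepsilon^{1/s}$. Taking $\varepsilon$ small compared to $\gamma^s$ contradicts the hypothesis, so $c_{\rm sim}(\bsalpha)\gg_s\gamma^{s}$.

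\emph{Converse direction.} Run the same duality argument with the roles of the two systems interchanged: a dual solution with $\|\bsh\|_\infty^s\|\bsh\cdot\bsalpha\|\le\varepsilon$ gives, through the successive-minima inequality for the dual pair, some $q\ge1$ with $q^{1/s}\max_j\|q\alpha_j\|\ll_s\varepsilon^{1/s^2}$ (or some fixed power of $\varepsilon$). We must also check that the vector obtained is nondegenerate, i.e.\ $q\neq0$, respectively $\bsh\neq0$. This follows because a vector with $q=0$ would force $|p_j|<1$, hence $\boldsymbol p=0$, once the weights are chosen with $\varepsilon<1$.

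The main obstacle is the quantitative bookkeeping in the dual-lattice step: choosing the diagonal weights so that the determinant is exactly $1$ and the short vector of one lattice becomes a bound on $\lambda_{s+1}$ of the other. Since the paper merely cites \cite{C57}, I would follow Cassels' Chapter V Theorem II, whose statement is exactly this transference with explicit constants, and verify only that the exponents match the normalizations $q^{1/s}$ and $\|\bsh\|_\infty^s$ used here.
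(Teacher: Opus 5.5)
Your plan matches the paper's, which gives no argument beyond citing the Corollary to Theorem~II in Chapter~V of \cite{C57}; that result is proved exactly by Mahler's transference between the dual systems $(q,\,q\alpha_j-p_j)$ and $(h_0+\bsh\cdot\bsalpha,\,h_j)$ via Minkowski's theorems, as you sketch. The one slip is quantitative and does not affect the qualitative equivalence: Mahler's lemma bounds the product of the new box sides by $\ll_s\Pi^{1/s}$, not $\ll_s\Pi$, and since $q^{1/s}\max_j\|q\alpha_j\|\le\varepsilon$ means $\Pi=\varepsilon^s$, your first direction (a good simultaneous approximation producing a good dual one) actually gives $\|\bsh\|_\infty^s\|\bsh\cdot\bsalpha\|\ll_s\varepsilon$, hence $c_{\rm sim}\gg_s c_{\rm dual}$ (your $\varepsilon^{1/s}$ is true but weaker, and your intermediate dual-minimum claim does not match your chosen weights), while your exponent $\varepsilon^{1/s^2}$ in the other direction is correct.
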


For $\bsalpha\in\RR^s$, we denote the initial segment with length $N$ of the corresponding Kronecker sequence by
\begin{align}\label{eq:kronecker-initial-segment}
 K_N(\bsalpha)=(\{n\bsalpha\})_{n=0}^{N-1}.
\end{align}
We shall also use the following characterization from \cite{DGLPS25}.

\begin{theorem}\label{thm:kronecker-characterization}
The families $K_N(\bsalpha)$ are quasi-uniform in $[0,1)^s$ if and only if $c_{\rm sim}(\bsalpha)>0$. In that case, $q(K_N(\bsalpha))\gtrsim N^{-1/s}$ and $h(K_N(\bsalpha))\lesssim N^{-1/s}$ uniformly over $N\ge2$.
\end{theorem}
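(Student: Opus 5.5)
The statement contains two directions and the two quantitative bounds. I would take $q(K_N(\bsalpha))$, the lower bound, first; then $h(K_N(\bsalpha))$, the upper bound, which is the main obstacle; and the converse last.

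\emph{Separation.} Assume $c_{\rm sim}(\bsalpha)=c>0$. For $0\le m<n<N$ put $k=n-m$, so $1\le k<N$. The Euclidean distance between $\{n\bsalpha\}$ and $\{m\bsalpha\}$ is at least their periodic distance. The periodic distance is at least $\max_j\|k\alpha_j\|$, and by definition of $c_{\rm sim}$ this is at least $c\,k^{-1/s}\ge c\,N^{-1/s}$. Hence $q(K_N(\bsalpha))\ge \tfrac c2 N^{-1/s}$.

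\emph{Covering.} Here I would pass to a lattice in $\RR^{s+1}$. By \Cref{thm:khintchine-transference}, $c_{\rm dual}(\bsalpha)>0$ with a constant depending only on $c$ and $s$. Consider the unimodular lattice
\[
\Gamma_N=\bigl\{(N^{-1}k,\;N^{1/s}(k\bsalpha-\boldsymbol p)) \mid k\in\ZZ,\ \boldsymbol p\in\ZZ^s\bigr\}.
\]
Its dual lattice consists of the vectors $(N(h_0+\bsh\cdot\bsalpha),\,N^{-1/s}\bsh)$ with $(h_0,\bsh)\in\ZZ^{s+1}$.
\begin{itemize}
\item For $\bsh\neq0$, take $h_0$ to be the integer nearest to $-\bsh\cdot\bsalpha$. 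If $\|\bsh\|_\infty\ge N^{1/s}$, the last coordinates already have size at least $1$. Otherwise, $c_{\rm dual}>0$ gives $N\|\bsh\cdot\bsalpha\|\ge c_{\rm dual}N\|\bsh\|_\infty^{-s}\ge c_{\rm dual}$.
\item For $\bsh=0$, the dual vector is $(Nh_0,0)$, of length at least $N\ge1$.
\end{itemize}
So the first minimum of the dual lattice is bounded below by a constant independent of $N$.

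Mahler's relations between the successive minima of a lattice and its dual, combined with Minkowski's second theorem, then make all successive minima of $\Gamma_N$ of order $1$. Hence the covering radius of $\Gamma_N$ is at most a constant $C$.

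Given a target $\bsx\in[0,1)^s$, I would cover the shifted point $(N^{-1}k_0,\,N^{1/s}\bsx)$, with $k_0$ chosen in the middle of the desired index range. This yields $k$ with $|k-k_0|\le CN$ and $\|k\bsalpha-\bsx\|_{\TT^s}\le C N^{-1/s}$.

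To guarantee $0\le k<N$, I would apply this at level $N'=\lfloor N/(2C+1)\rfloor$ with $k_0=CN'$. This gives a periodic covering radius $\lesssim N^{-1/s}$, and \Cref{lem:boundary-transfer} converts it into the Euclidean bound. Small $N$, for which $N'$ would be $0$, are absorbed into the constant since $h\le\sqrt s$.

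\emph{Converse.} Suppose $c_{\rm sim}(\bsalpha)=0$. For any $\varepsilon>0$ there is $k\ge1$ with $k^{1/s}\max_j\|k\alpha_j\|<\varepsilon$. Take $N=k+1$. The points $\{0\}$ and $\{k\bsalpha\}$ are then periodically within $\sqrt s\,\varepsilon k^{-1/s}$ of each other, which gives a small periodic separation. To turn this into a small Euclidean separation, I would argue as follows.
\begin{itemize}
\item If the short vector wraps around a coordinate, I would instead use the pair $\{m\bsalpha\},\{(m+k)\bsalpha\}$ with $N=m+k+1$, choosing $m\le k$ so that $\{m\bsalpha\}$ lies in the interior part of the cube. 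Such an $m$ exists by equidistribution, or trivially if $\bsalpha$ has rational coordinates, since then collisions occur.
\item This makes $q(K_N(\bsalpha))\lesssim\varepsilon N^{-1/s}$.
\item A volume argument gives $h(K_N(\bsalpha))\gtrsim N^{-1/s}$, because $N$ balls of radius $h$ must cover the unit cube.
\end{itemize}
Then $\rho\gtrsim 1/\varepsilon$ is unbounded, which proves the ``only if'' direction.

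The step I expect to be hardest is the covering estimate. It requires the dual minimum bound for every $N$, Mahler's transference between the lattice and its dual, and the bookkeeping that restricts $k$ to the range $[0,N)$.
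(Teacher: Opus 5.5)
The paper gives no proof of this statement. It is quoted from \cite{DGLPS25} and used only in the direction $c_{\rm sim}(\bsalpha)>0\Rightarrow$ quasi-uniformity, with $s=2$. Your self-contained geometry-of-numbers argument is correct apart from one step in the converse, discussed below.

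The steps of the ``if'' direction check out:
\begin{itemize}
\item The separation bound is immediate.
\item Your $\Gamma_N$ is unimodular, with the dual you write down.
\item The bound $\lambda_1(\Gamma_N^\perp)\ge\min\{1,c_{\rm dual}(\bsalpha)\}$ holds for every $N\ge1$.
\item Mahler's inequality $\lambda_1(\Gamma_N^\perp)\lambda_{s+1}(\Gamma_N)\le(s+1)!$ then bounds the covering radius.
\item The choice $N'=\lfloor N/(2C+1)\rfloor$, $k_0=CN'$ forces $0\le k\le 2CN'<N$, and $N'\ge N/(2(2C+1))$ whenever $N'\ge1$.
\end{itemize}

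The detour through \Cref{thm:khintchine-transference} and Mahler is avoidable. The hypothesis $c_{\rm sim}(\bsalpha)=c>0$ already gives $\lambda_1(\Gamma_N)\ge\min\{1,c\}$: nonzero vectors with $k=0$, $|k|\ge N$, or $1\le|k|<N$ have lengths at least $N^{1/s}$, $1$, and $N^{1/s}c|k|^{-1/s}>c$, respectively. Minkowski's second theorem with $\det\Gamma_N=1$ then bounds $\lambda_{s+1}(\Gamma_N)$ directly.

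Your version also shows something the citation hides. Covering needs only $c_{\rm dual}(\bsalpha)>0$, and separation needs only $c_{\rm sim}(\bsalpha)>0$. So in \Cref{thm:kronecker-main} the covering bound follows directly from \Cref{prop:cubic-dual-bound}, and transference is needed only for separation. This makes rigorous the remark following \eqref{eq:kronecker-diophantine-pair-loss}.

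The step that fails as written is the choice of $m$ in the converse. You justify it ``by equidistribution, or trivially if $\bsalpha$ has rational coordinates, since then collisions occur''. Neither half holds in general:
\begin{itemize}
\item Equidistribution in $[0,1)^s$ requires $1,\alpha_1,\ldots,\alpha_s$ to be $\QQ$-linearly independent. The condition $c_{\rm sim}(\bsalpha)=0$ does not give this; for example, $\alpha_1=\alpha_2=\sqrt2$.
\item A rational coordinate does not force collisions. For $\bsalpha=(0,\sqrt2)$ one has $c_{\rm sim}(\bsalpha)=0$ and no coinciding points. Yet $\{m\alpha_1\}=0$ never lies in the interior of the cube, so your recipe cannot be carried out.
\end{itemize}

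The fact you need is still elementary. Write $k\alpha_j=p_j+\delta_j$ with $p_j\in\ZZ$ and $|\delta_j|=\|k\alpha_j\|<\varepsilon k^{-1/s}$. Only coordinates with $\delta_j\neq0$ can wrap around.
\begin{itemize}
\item If $\alpha_j=a/b$ is rational, then $\|k\alpha_j\|\in\{0\}\cup[1/b,1/2]$, so $\delta_j=0$ once $\varepsilon<1/b$.
\item If $\alpha_j$ is irrational, then $\{\alpha_j\}\in(0,1)$, so $\{\alpha_j\}+\delta_j\in(0,1)$ once $\varepsilon\le\min(\{\alpha_j\},1-\{\alpha_j\})$.
\end{itemize}
Hence $m=1$ and $N=k+2\le 3k$ work for all sufficiently small $\varepsilon$. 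This gives $q(K_N(\bsalpha))\le\frac{\sqrt s}{2}\varepsilon k^{-1/s}\lesssim\varepsilon N^{-1/s}$. Combined with your volume bound $h(K_N(\bsalpha))\gtrsim N^{-1/s}$, the converse goes through without any equidistribution.
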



\section{Algebraic Kronecker sequences}
\label{sec:kronecker}

In this section, we construct, in every ambient dimension $d\geq2$, an explicit Kronecker sequence whose every two-dimensional coordinate projection is quasi-uniform.  The construction starts from a cubic number field and a rational projective coefficient configuration.  Its essential feature is that each pair of projective directions produces a basis of the same cubic field.  An algebraic norm estimate then gives a uniform dual Diophantine inequality, which is converted into simultaneous bad approximability by transference.

\subsection{A cubic norm estimate}
\label{subsec:cubic-norm-estimate}

Let $K$ be a cubic number field embedded in $\mathbb{R}$, and choose a real algebraic integer $\theta\in\Ocal_K$ such that $K=\QQ(\theta)$.
We denote the three embeddings of $K$ into $\mathbb{C}$ by
\[
 \sigma_0=\operatorname{id},\quad \sigma_1,\quad \sigma_2,
\]
where $\sigma_0$ is the fixed real embedding. The other two embeddings are
either both real or form a complex-conjugate pair. For $\gamma\in K$, we write
\[ N_{K/\QQ}(\gamma)=\prod_{\ell=0}^{2}\sigma_{\ell}(\gamma)\]
for its field norm. In particular, $N_{K/\QQ}(\gamma)\in\ZZ$ for every
$\gamma\in\Ocal_K$.

Let
\[
 V=(v_1,\ldots,v_d),
 \quad
 v_j=[a_j:b_j]\in\PP^1(\QQ),
\]
be a rational projective coefficient configuration in the sense of
\Cref{def:projective-configuration}, and let $(a_j,b_j)$ be its canonical
primitive integer representatives.  We associate with $V$ the algebraic
numbers
\begin{align}\label{eq:cubic-alpha-definition}
 \alpha_j\coloneqq a_j\theta+b_j\theta^2\in\Ocal_K,
 \quad j=1,\ldots,d,
\end{align}
and write $\boldsymbol{\alpha}(K,V) \coloneqq (\alpha_1,\ldots,\alpha_d).$

The projective distinctness condition has the following immediate algebraic
interpretation.

\begin{lemma}\label{lem:cubic-pair-basis}
For every $1\leq i<j\leq d$, the three elements $1,\alpha_i,\alpha_j$ form a $\QQ$-basis of $K$.
\end{lemma}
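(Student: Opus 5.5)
Since $K=\QQ(\theta)$ is cubic, the minimal polynomial of $\theta$ has degree $3$, so $1,\theta,\theta^2$ is a $\QQ$-basis of $K$. Because $\dim_\QQ K=3$, it suffices to show that $1,\alpha_i,\alpha_j$ are linearly independent over $\QQ$.

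Suppose $c_0+c_1\alpha_i+c_2\alpha_j=0$ with $c_0,c_1,c_2\in\QQ$. Substituting \eqref{eq:cubic-alpha-definition} gives
\[
 c_0+(c_1a_i+c_2a_j)\theta+(c_1b_i+c_2b_j)\theta^2=0 .
\]
By the independence of $1,\theta,\theta^2$, we get $c_0=0$ together with the linear system
\[
 \begin{pmatrix}a_i&a_j\\ b_i&b_j\end{pmatrix}\begin{pmatrix}c_1\\ c_2\end{pmatrix}=0 .
\]
The determinant of this matrix is $a_ib_j-a_jb_i$, which is nonzero by \eqref{eq:projective-distinctness-determinant}. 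Hence $c_1=c_2=0$, and $1,\alpha_i,\alpha_j$ form a $\QQ$-basis of $K$.

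Equivalently, the transition matrix from $(1,\theta,\theta^2)$ to $(1,\alpha_i,\alpha_j)$ is block-triangular with determinant $a_ib_j-a_jb_i\neq0$. There is no real obstacle here. The only points to check are that the degree is exactly $3$, which holds because $K$ is cubic, and that the distinctness of $v_i$ and $v_j$ in $\PP^1(\QQ)$ is exactly the nonvanishing of this determinant.
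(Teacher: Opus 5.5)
Your proof is correct and is essentially the paper's argument. The paper writes down the coefficient matrix of $(1,\alpha_i,\alpha_j)$ with respect to the basis $(1,\theta,\theta^2)$ and observes that its determinant is $a_ib_j-a_jb_i\neq0$; your linear-independence computation is that same determinant check written out.
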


\begin{proof}
With respect to the basis $(1,\theta,\theta^2)$, the coefficient matrix of
$(1,\alpha_i,\alpha_j)$ is
\[
 \begin{pmatrix}
  1&0&0\\
  0&a_i&b_i\\
  0&a_j&b_j
 \end{pmatrix}.
\]
Its determinant is
$\Delta_{ij}=a_i b_j-a_j b_i$, which is nonzero by
\eqref{eq:projective-distinctness-determinant}.
\end{proof}

For $i<j$ and $\ell\in\{1,2\}$, set
\begin{align}\label{eq:cubic-archimedean-factor}
 B_{ij,\ell}
 \coloneqq 1+|\alpha_i|+|\alpha_j|
   +|\sigma_\ell(\alpha_i)|+|\sigma_\ell(\alpha_j)|,
\end{align}
and define
\begin{align}\label{eq:cubic-dual-certificate}
 \kappa_{ij}\coloneqq \frac{1}{B_{ij,1}B_{ij,2}}.
\end{align}
The next proposition gives a completely explicit certificate for dual bad
approximability. The constants may not be sharp, but their
dependence on the algebraic field and on the projective coefficients is
transparent.

\begin{proposition}\label{prop:cubic-dual-bound}
For every $1\leq i<j\leq d$ and every
$(h_1,h_2)\in\mathbb{Z}^2\setminus\{(0,0)\}$,
\begin{align}\label{eq:cubic-dual-inequality}
 \bigl\|h_1\alpha_i+h_2\alpha_j\bigr\|
 \geq
 \kappa_{ij}\,
 \max\{|h_1|,|h_2|\}^{-2}.
\end{align}
Consequently, $c_{\rm dual}(\alpha_i,\alpha_j)\geq\kappa_{ij}>0$.
\end{proposition}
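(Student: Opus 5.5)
Let $H=\max\{|h_1|,|h_2|\}\ge1$ and let $k\in\ZZ$ be the nearest integer to $h_1\alpha_i+h_2\alpha_j$, so that $\|h_1\alpha_i+h_2\alpha_j\|=|\gamma|$ with
\[
 \gamma\coloneqq h_1\alpha_i+h_2\alpha_j-k\in\Ocal_K .
\]
The plan is the standard norm argument. By \Cref{lem:cubic-pair-basis}, $1,\alpha_i,\alpha_j$ are $\QQ$-linearly independent, so $\gamma\neq0$ because $(h_1,h_2)\neq(0,0)$. Hence $N_{K/\QQ}(\gamma)$ is a nonzero rational integer, and so $|\gamma|\,|\sigma_1(\gamma)|\,|\sigma_2(\gamma)|\ge1$. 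It therefore suffices to bound $|\sigma_\ell(\gamma)|\le B_{ij,\ell}H$ for $\ell=1,2$. This gives $|\gamma|\ge (B_{ij,1}B_{ij,2}H^2)^{-1}=\kappa_{ij}H^{-2}$, which is \eqref{eq:cubic-dual-inequality}.

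For the conjugate bound, first control $k$. Since $|\gamma|\le 1/2$,
\[
 |k|\le |h_1||\alpha_i|+|h_2||\alpha_j|+\tfrac12\le H(|\alpha_i|+|\alpha_j|)+\tfrac12 .
\]
Since $\sigma_\ell(k)=k$, we have $\sigma_\ell(\gamma)=h_1\sigma_\ell(\alpha_i)+h_2\sigma_\ell(\alpha_j)-k$. Therefore
\[
 |\sigma_\ell(\gamma)|\le H\bigl(|\sigma_\ell(\alpha_i)|+|\sigma_\ell(\alpha_j)|+|\alpha_i|+|\alpha_j|\bigr)+\tfrac12\le B_{ij,\ell}H,
\]
using $1/2\le H$. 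This matches the definition \eqref{eq:cubic-archimedean-factor} exactly, which is reassuring. The modulus is needed in the complex case, where $\sigma_2=\overline{\sigma_1}$; the argument is unchanged there.

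Finally, the statement $c_{\rm dual}(\alpha_i,\alpha_j)\ge\kappa_{ij}$ follows by multiplying \eqref{eq:cubic-dual-inequality} by $\|\bsh\|_\infty^2=H^2$ and taking the infimum. There is no real obstacle here. The only points that need care are the nonvanishing of $\gamma$, which is exactly where projective distinctness enters through \Cref{lem:cubic-pair-basis}, and keeping the additive $1/2$ inside the factor $B_{ij,\ell}$ via $H\ge1$.
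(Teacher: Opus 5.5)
Your proof is correct and is essentially the paper's argument: your $-k$ is the paper's $h_0$, $\gamma\neq0$ comes from \Cref{lem:cubic-pair-basis}, integrality of the norm gives $|\gamma|\,|\sigma_1(\gamma)|\,|\sigma_2(\gamma)|\ge1$, and each conjugate is bounded by $HB_{ij,\ell}$. The only difference is cosmetic: the paper absorbs the additive $1/2$ into $|h_0|\le H(1+|\alpha_i|+|\alpha_j|)$ before bounding the conjugates, whereas you absorb it at the last step using $H\ge1$.
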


\begin{proof}
Put $H\coloneqq \max\{|h_1|,|h_2|\}\geq1$, and choose $h_0\in\mathbb{Z}$ such that $\gamma\coloneqq h_0+h_1\alpha_i+h_2\alpha_j$ satisfies $|\gamma|=\|h_1\alpha_i+h_2\alpha_j\|$.
By \Cref{lem:cubic-pair-basis}, $\gamma\neq0$.  Moreover,
$\gamma\in\Ocal_K$, and hence
\[
 1\leq|N_{K/\QQ}(\gamma)|
 =|\gamma|\,|\sigma_1(\gamma)|\,|\sigma_2(\gamma)|.
\]
The choice of $h_0$ gives
\[
 |h_0|
 \leq |h_1\alpha_i+h_2\alpha_j|+\frac12
 \leq H\left(1+|\alpha_i|+|\alpha_j|\right).
\]
Therefore, for $\ell\in \{1,2\}$, we have
\[
 |\sigma_\ell(\gamma)|
 \leq |h_0|+|h_1|\,|\sigma_\ell(\alpha_i)|
                 +|h_2|\,|\sigma_\ell(\alpha_j)|
 \leq H B_{ij,\ell}.
\]
Substituting these bounds into the norm inequality yields
\[
 |\gamma|
 \geq \frac{1}{B_{ij,1}B_{ij,2}}H^{-2},
\]
which is nothing but \eqref{eq:cubic-dual-inequality}.
\end{proof}

It is convenient to aggregate the pairwise certificates into the
field-dependent functional
\begin{align}\label{eq:cubic-algebraic-cost}
 \mathcal{C}_K(V)
 \coloneqq \max_{1\leq i<j\leq d} B_{ij,1}B_{ij,2}
 =\left(\min_{1\leq i<j\leq d}\kappa_{ij}\right)^{-1}.
\end{align}
Thus
\begin{align}\label{eq:cubic-uniform-dual-bound}
 \bigl\|h_1\alpha_i+h_2\alpha_j\bigr\|
 \geq
 \mathcal{C}_K(V)^{-1}
 \max\{|h_1|,|h_2|\}^{-2}
\end{align}
holds simultaneously for every coordinate pair.  The quantity
$\mathcal{C}_K(V)$ will serve later as a certified, although generally
conservative, algebraic proxy for the worst projected mesh ratio.

\subsection{Quasi-uniform bivariate projections}
\label{subsec:cubic-main-theorem}

The $d$-dimensional Kronecker sequence generated by
$\boldsymbol{\alpha}(K,V)$ is
\begin{align}\label{eq:cubic-kronecker-sequence}
 \boldsymbol{x}_n
 \coloneqq \left(\{n\alpha_1\},\ldots,\{n\alpha_d\}\right),
 \quad n=0,1,2,\ldots,
\end{align}
and its first $N$ points, retained as an indexed family, are denoted by
\begin{align}\label{eq:cubic-kronecker-initial-family}
 \mathcal{K}_N(K,V)
 \coloneqq (\boldsymbol{x}_n)_{n=0}^{N-1}.
\end{align}
For each pair $i<j$, its $(i,j)$-projection is exactly the initial segment
$K_N(\alpha_i,\alpha_j)$ from
\eqref{eq:kronecker-initial-segment}.

\begin{theorem}\label{thm:kronecker-main}
Let $K=\QQ(\theta)\subset\mathbb{R}$ be a cubic number field generated by a
real algebraic integer $\theta$, and let
$V=(v_1,\ldots,v_d)$ be a rational projective coefficient configuration of
size $d\geq2$.  Then every two-dimensional coordinate projection of the
Kronecker sequence \eqref{eq:cubic-kronecker-sequence} is quasi-uniform in
the Euclidean unit square.  More precisely, there exist constants
$c_{K,V},C_{K,V}>0$ such that, for all $N\geq2$ and all
$1\leq i<j\leq d$,
\begin{align*}
 q_{ij}\left(\mathcal{K}_N(K,V)\right) \ge c_{K,V}N^{-1/2}\quad \text{and}\quad
 h_{ij}\left(\mathcal{K}_N(K,V)\right) \le C_{K,V}N^{-1/2}.
\end{align*}
In particular, we have
\begin{align*}
 \sup_{N\geq2}\max_{1\leq i<j\leq d}
 \rho_{ij}\left(\mathcal{K}_N(K,V)\right)<\infty.
\end{align*}
\end{theorem}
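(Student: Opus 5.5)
The plan is to work one coordinate pair at a time and only then collect constants. Fix $1\le i<j\le d$ and set $\bsalpha_{ij}=(\alpha_i,\alpha_j)\in\RR^2$. By definition of the Kronecker sequence \eqref{eq:cubic-kronecker-sequence}, the $(i,j)$-projection of $\mathcal{K}_N(K,V)$ is exactly the initial segment $K_N(\bsalpha_{ij})$. So $q_{ij}(\mathcal{K}_N(K,V))=q(K_N(\bsalpha_{ij}))$ and $h_{ij}(\mathcal{K}_N(K,V))=h(K_N(\bsalpha_{ij}))$, and the claim reduces to a two-dimensional statement about Kronecker sequences.

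The argument then chains three earlier results.
\begin{enumerate}
\item \Cref{prop:cubic-dual-bound} gives $c_{\rm dual}(\bsalpha_{ij})\ge\kappa_{ij}>0$. This in turn rests on \Cref{lem:cubic-pair-basis}, which ensures the algebraic integer $\gamma=h_0+h_1\alpha_i+h_2\alpha_j$ is nonzero, so its norm is a nonzero integer.
\item \Cref{thm:khintchine-transference} with $s=2$ converts this into $c_{\rm sim}(\bsalpha_{ij})\ge c'_{ij}>0$. As remarked before that theorem, $c'_{ij}$ depends only on $\kappa_{ij}$. Since the transference bound is monotone in the input constant, we may take a single value $c'=c'(\mathcal{C}_K(V)^{-1})$ valid for all pairs, using \eqref{eq:cubic-uniform-dual-bound}.
\item \Cref{thm:kronecker-characterization} with $s=2$ then yields $q(K_N(\bsalpha_{ij}))\ge c_{ij}N^{-1/2}$ and $h(K_N(\bsalpha_{ij}))\le C_{ij}N^{-1/2}$ for all $N\ge2$.
\end{enumerate}

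Uniformity over pairs is straightforward because there are only $\binom d2$ of them. I would set $c_{K,V}=\min_{i<j}c_{ij}$ and $C_{K,V}=\max_{i<j}C_{ij}$, both positive and finite, which gives the two displayed bounds. The mesh ratio bound then follows as $\rho_{ij}\le C_{K,V}/c_{K,V}$. This ratio is finite because $q_{ij}>0$, which also rules out any collisions in the projections.

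The only delicate point is whether \Cref{thm:kronecker-characterization} gives constants that depend only on $c_{\rm sim}$. The theorem as quoted gives constants depending on $\bsalpha$, which is enough here since the pairs are finitely many. If one wanted constants explicit in $\mathcal{C}_K(V)$, I would instead rederive the bounds directly: the separation bound from $\|n\bsalpha\|_\infty\ge c_{\rm sim}n^{-1/2}$ applied to differences of indices, and the covering bound from a counting argument driven by the dual inequality. I would not pursue that unless it is needed.
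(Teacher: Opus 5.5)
Your proposal is correct and follows essentially the same route as the paper: the norm-based dual bound of \Cref{prop:cubic-dual-bound}, Khintchine transference (\Cref{thm:khintchine-transference}), the characterization \Cref{thm:kronecker-characterization} with $s=2$, and then the minimum and maximum of the constants over the finitely many coordinate pairs. Your remark that the transferred constant can be made uniform via $\mathcal{C}_K(V)^{-1}$ is a harmless refinement that the statement does not need.
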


\begin{proof}
By \Cref{prop:cubic-dual-bound}, each vector $(\alpha_i,\alpha_j)$ is dually badly approximable. The transference shown in \Cref{thm:khintchine-transference} therefore implies that it is simultaneously badly approximable.  The characterization in \Cref{thm:kronecker-characterization}, applied with $s=2$, yields constants $c_{ij},C_{ij}>0$ for which
\[
 q\left(K_N(\alpha_i,\alpha_j)\right)
 \geq c_{ij}N^{-1/2},
 \quad
 h\left(K_N(\alpha_i,\alpha_j)\right)
 \leq C_{ij}N^{-1/2},
\]
for every $N\geq2$. Since there are only finitely many coordinate pairs, we may take
\[
 c_{K,V}\coloneqq \min_{i<j}c_{ij}>0,
 \quad
 C_{K,V}\coloneqq \max_{i<j}C_{ij}<\infty.
\]
This completes the proof.
\end{proof}

\begin{remark}\label{rem:cubic-effectivity}
The proof separates the qualitative and quantitative roles of the two Diophantine inequalities. The norm argument gives the explicit dual certificate
\[
 c_{\rm dual}(\alpha_i,\alpha_j)\geq\kappa_{ij}.
\]
Quantitative forms of the transference shown in \Cref{thm:khintchine-transference} and of the proof of \Cref{thm:kronecker-characterization} (from \cite{DGLPS25}) then make the constants $c_{K,V},C_{K,V}$ in Theorem~\ref{thm:kronecker-main} effective. Those transferred constants are typically too conservative to rank candidate configurations accurately.  In the extremal problem considered later, we use both the certified algebraic quantity $\mathcal{C}_K(V)$ and finite-segment geometric criteria.
\end{remark}

\begin{remark}
    Although every one-dimensional projection is uniformly distributed, its quasi-uniformity is a much subtler question. Each coordinate $\alpha_j=a_j\theta+b_j\theta^2$ is a cubic irrational, and the quasi-uniformity of its one-dimensional Kronecker sequence is equivalent to $\alpha_j$ having bounded continued-fraction partial quotients \cite{G24b}. It is widely conjectured that every real algebraic irrational of degree at least three has unbounded partial quotients; this question appears to go back to Khintchine \cite{ABD06}. Thus, conjecturally, none of the one-dimensional projections of our cubic-field construction is quasi-uniform, although every two-dimensional coordinate projection is quasi-uniform.
\end{remark}

\subsection{Explicit coefficient families}
\label{subsec:cubic-explicit-families}

\Cref{thm:kronecker-main} applies to any finite collection of distinct rational directions. A particularly simple family is obtained by choosing distinct integers $c_1,\ldots,c_d$ and setting
\begin{align}\label{eq:cubic-affine-projective-family}
 v_j=[1:c_j],
 \quad
 \alpha_j=\theta+c_j\theta^2.
\end{align}
Indeed, $\Delta_{ij}=c_j-c_i\neq 0$. This gives the following fully explicit construction.

\begin{corollary}
\label{cor:cubic-explicit-family}
Let $\theta$ be any real algebraic integer of degree three.  For every $d\geq2$, the Kronecker sequence generated by
\begin{align}\label{eq:cubic-simple-generator}
 \bsalpha_d =\left(\theta, \theta+\theta^2, \ldots, \theta+(d-1)\theta^2\right)
\end{align}
has quasi-uniform two-dimensional coordinate projections, uniformly over all initial segment lengths $N\geq2$.
\end{corollary}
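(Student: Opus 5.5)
The corollary is the special case of \Cref{thm:kronecker-main} in which the configuration is $V=(v_1,\ldots,v_d)$ with $v_j=[1:j-1]$. Indeed, $\theta+(j-1)\theta^2=a_j\theta+b_j\theta^2$ with $(a_j,b_j)=(1,j-1)$. So the plan is to check the hypotheses of \Cref{thm:kronecker-main} and then quote its conclusion.

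There are three checks to make.

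\begin{enumerate}
\item The field $K=\QQ(\theta)$ must be a cubic field embedded in $\RR$ and generated by a real algebraic integer. This holds because $\theta$ is a real algebraic integer of degree three, so $K$ is cubic, and we take $\sigma_0=\operatorname{id}$ as the fixed real embedding.
\item Each pair $(1,j-1)$ must be the canonical primitive representative from \eqref{eq:projective-height}. It is primitive because $\gcd(1,j-1)=1$. It has $a_j=1>0$, which is the sign normalization. Hence the $\alpha_j$ of \eqref{eq:cubic-alpha-definition} are exactly the entries of $\bsalpha_d$ in \eqref{eq:cubic-simple-generator}.
\item The directions must be distinct, i.e.\ \eqref{eq:projective-distinctness-determinant} must hold. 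Here
\[
 \Delta_{ij}=a_ib_j-a_jb_i=(j-1)-(i-1)=j-i\neq0 \quad\text{for } i<j .
\]
So $V$ is a rational projective coefficient configuration of size $d$ in the sense of \Cref{def:projective-configuration}.
\end{enumerate}

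With these hypotheses in place, \Cref{thm:kronecker-main} gives constants $c_{K,V},C_{K,V}>0$ such that, for every $N\ge2$ and every $i<j$,
\[
 q_{ij}\ge c_{K,V}N^{-1/2},\qquad h_{ij}\le C_{K,V}N^{-1/2}.
\]
Therefore $\sup_{N\ge2}\max_{i<j}\rho_{ij}<\infty$, which is the claimed uniform quasi-uniformity of all two-dimensional projections.

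There is no real obstacle here; the only points needing care are the normalization of the representatives and the fact that $\theta$ is assumed to be an algebraic integer, which gives $\alpha_j\in\Ocal_K$ as required. If one wants explicit constants, \Cref{prop:cubic-dual-bound} gives $c_{\rm dual}(\alpha_i,\alpha_j)\ge\kappa_{ij}$, with $B_{ij,\ell}$ computable from $|\theta|$, $|\sigma_\ell(\theta)|$ and $d$.
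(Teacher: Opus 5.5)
Your proposal is correct and follows the paper's argument: the corollary is \Cref{thm:kronecker-main} applied to the configuration $v_j=[1:c_j]$ with $c_j=j-1$, whose distinctness the paper likewise checks through $\Delta_{ij}=c_j-c_i\neq0$. Your additional checks (canonical primitive representatives, and $\alpha_j\in\Ocal_K$) are the right ones and are handled correctly.
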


Taking $\theta=\sqrt[3]{2}$ in \eqref{eq:cubic-simple-generator} produces a construction requiring no search and no auxiliary arithmetic choices.  Another natural configuration distributes four coefficient directions more symmetrically in $\PP^1(\mathbb{R})$. For instance, in $d=4$, we may take
\begin{align}\label{eq:cubic-cross-configuration}
 V_4^{\rm cross}
 \coloneqq \left([1:0],[0:1],[1:1],[1:-1]\right).
\end{align}
It gives $\bsalpha^{\rm cross} =\left(\theta,\theta^2,\theta+\theta^2,\theta-\theta^2\right)$. No optimality is asserted here; configurations such as \eqref{eq:cubic-cross-configuration} will be compared systematically in the later extremal analysis.

\begin{remark}
\label{rem:cubic-not-full-dimensional}
For $d>2$, all coordinates $\alpha_j$ belong to the two-dimensional $\QQ$-vector space $\QQ\theta+\QQ\theta^2$. Hence there is a nonzero $\boldsymbol{h}\in\mathbb{Z}^d$ such that $\boldsymbol{h}\cdot\boldsymbol{\alpha}=0$. The full $d$-dimensional Kronecker sequence is therefore contained in a proper subtorus and is not uniformly distributed in $[0,1)^d$. By contrast, \Cref{lem:cubic-pair-basis} shows that every pair $1,\alpha_i,\alpha_j$ is rationally independent.  The fixed degree of the number field is precisely what allows the ambient dimension $d$ to be arbitrary while retaining uniform control of every bivariate projection.
\end{remark}


\section{Nested rank-1 lattice sequences}
\label{sec:nested}

We next construct nested rank-1 lattice point sets whose bivariate coordinate projections are all quasi-uniform.  The compatibility across nesting levels is supplied by a $p$-adic embedding of a real quadratic field, while the lower bounds for the projected dual minima follow from divisibility of algebraic norms.  As in the previous cubic construction, the coordinates are parametrized in terms of a rational projective coefficient configuration.

\subsection{Split primes and admissible coefficient configurations}
\label{subsec:quadratic-local-data}

Let $L$ be a real quadratic field.  We fix an integral generator $\omega\in\Ocal_L$ such that
\begin{align}\label{eq:quadratic-ring-generator}
 L=\QQ(\omega),
 \quad
 \Ocal_L=\ZZ[\omega].
\end{align}
Let $\sigma_1,\sigma_2:L\to \RR$ denote its two real embeddings. For $\gamma\in L$, we write
\[
N_{L/\QQ}(\gamma)=\sigma_1(\gamma)\sigma_2(\gamma)
\]
for its field norm.
Let $p$ be a rational prime that splits in $L$. Thus, we have
\begin{align}\label{eq:quadratic-split-prime}
p\Ocal_L=\mathfrak p\,\overline{\mathfrak p},
\quad
\mathfrak p\neq\overline{\mathfrak p},
\end{align}
where $\mathfrak p$ and $\overline{\mathfrak p}$ are the two distinct prime ideals of $\Ocal_L$ lying above $p$. Because $p$ splits, the completion $L_{\mathfrak p}$ of $L$ with respect to the $\mathfrak{p}$-adic valuation is isomorphic to $\QQ_p$, the field of $p$-adic numbers. Fix such an isomorphism. Composing it with the canonical embedding of $\Ocal_L$ into the ring of integers of $L_{\mathfrak p}$ gives an embedding
\begin{align}\label{eq:quadratic-p-adic-embedding}
 \iota_{\mathfrak p}:\Ocal_L\to \mathbb{Z}_p.
\end{align}
For $m\geq1$, reduction modulo $p^m$ gives a compatible ring homomorphism
\begin{align}\label{eq:quadratic-residue-map}
 \iota_m:\Ocal_L\to\mathbb{Z}/p^m\mathbb{Z},
 \quad
 \iota_{m+1}(\beta)\equiv\iota_m(\beta)\pmod{p^m}.
\end{align}
Its kernel is $\mathfrak p^m$. For a non-zero ideal $\mathfrak{a}\subset \Ocal_L$, we denote its absolute norm by
\[ N(\mathfrak{a})\coloneqq |\Ocal_L/\mathfrak{a}|.\]
Recall that $N((\gamma))=|N_{L/\QQ}(\gamma)|$ for every non-zero $\gamma\in \Ocal_L$.

As in the previous cubic-field construction, let
\[
 V=(v_1,\ldots,v_d),
 \quad
 v_j=[a_j:b_j]\in\PP^1(\QQ),
\]
be a rational projective coefficient configuration, with canonical primitive
integer representatives $(a_j,b_j)$.  Define
\begin{align}\label{eq:quadratic-beta-definition}
 \beta_j\coloneqq a_j+b_j\omega\in\Ocal_L,
 \quad j=1,\ldots,d.
\end{align}
Since $\omega\notin\QQ$, projective distinctness is equivalent to pairwise
rational independence of the corresponding quadratic numbers:
\begin{align}\label{eq:quadratic-projective-independence}
 v_i\neq v_j
 \quad\iff\quad
 \frac{\beta_i}{\beta_j}\notin\QQ.
\end{align}
Indeed, $\beta_i=q\beta_j$ with $q\in\QQ$ is equivalent, after comparing the
coefficients of $1$ and $\omega$, to
$(a_i,b_i)=q(a_j,b_j)$.

\begin{definition}\label{def:p-admissible-configuration}
The configuration $V$ is called $\mathfrak p$-admissible if
\begin{align}\label{eq:p-admissibility}
 \beta_j\notin\mathfrak p
 \quad(j=1,\ldots,d),
\end{align}
or equivalently, if
$\iota_{\mathfrak p}(\beta_j)\in\mathbb{Z}_p^\times$ for every $j$.
\end{definition}

The admissibility condition ensures that every generating component is
invertible modulo every power of $p$, which is mild.  For a fixed
configuration, only finitely many rational primes divide
$\prod_j|N_{L/\QQ}(\beta_j)|$, whereas a quadratic field has infinitely many
split primes.  Hence every finite rational projective configuration is
$\mathfrak p$-admissible for infinitely many choices of a split prime and a
prime ideal above it.  In particular, if all $\beta_j$ are global units,
then admissibility holds for every split prime.

\subsection{The nested construction}
\label{subsec:nested-construction}

Assume from now on that $V$ is $\mathfrak p$-admissible.  For each $m\geq1$
and $j=1,\ldots,d$, let
\begin{align}\label{eq:nested-generator-residue}
 z_{m,j}\in\{0,1,\ldots,p^m-1\}
\end{align}
be the standard integer representative of $\iota_m(\beta_j)$, and put
\begin{align}\label{eq:nested-generator-vector}
 \boldsymbol{z}_m\coloneqq (z_{m,1},\ldots,z_{m,d}).
\end{align}
The associated rank-1 lattice point family is
\begin{align}\label{eq:nested-point-family}
 \mathcal{P}_m(L,\mathfrak p,V)
 \coloneqq \left\{
 \left\{\frac{n\boldsymbol{z}_m}{p^m}\right\}
 \mid 0\le n<p^m\right\}
 \subset[0,1)^d.
\end{align}

\begin{proposition}
\label{prop:nestedness}
For every $m\geq1$, the family $\mathcal{P}_m(L,\mathfrak p,V)$ consists of exactly $p^m$ distinct points, and each of its coordinate projections, of any positive dimension, also contains $p^m$ distinct labeled points.  Moreover, $\mathcal{P}_m(L,\mathfrak p,V) \subset \mathcal{P}_{m+1}(L,\mathfrak p,V)$ as unlabeled point sets.
\end{proposition}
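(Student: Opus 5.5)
The plan is to reduce everything to the one-dimensional projections, using admissibility. Fix $m\ge1$ and a coordinate $j$. By \Cref{def:p-admissible-configuration}, $\beta_j\notin\mathfrak p$, and the kernel of $\iota_m$ is $\mathfrak p^m\subset\mathfrak p$. Since $\iota_m$ is reduction modulo $p^m$ of $\iota_{\mathfrak p}(\beta_j)\in\ZZ_p^\times$, the residue $z_{m,j}$ is a unit modulo $p^m$, i.e.\ $\gcd(z_{m,j},p)=1$.

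Hence the map $n\mapsto\{nz_{m,j}/p^m\}$ on $\{0,\ldots,p^m-1\}$ is injective: $nz_{m,j}\equiv n'z_{m,j}\pmod{p^m}$ forces $n\equiv n'\pmod{p^m}$. So the one-dimensional projection onto any coordinate $j$ already consists of $p^m$ distinct labeled points. Any projection onto a nonempty coordinate set $J$ determines the projection onto some $j\in J$, so it is also injective in $n$. Taking $J=\{1,\ldots,d\}$ shows that the full family has exactly $p^m$ distinct points. Equivalently, $\gcd(p^m,z_{m,1},\ldots,z_{m,d})=1$, so $M=p^m$ in the notation of \Cref{subsec:planar-reduction}.

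For nestedness, I use the compatibility \eqref{eq:quadratic-residue-map}. Since $z_{m+1,j}\equiv z_{m,j}\pmod{p^m}$, write $z_{m+1,j}=z_{m,j}+p^m t_j$ with $t_j\in\ZZ$. Given a point $\{n\bsz_m/p^m\}$ of $\mathcal P_m$, take $n'=pn<p^{m+1}$. Then
\[
 \frac{n'z_{m+1,j}}{p^{m+1}}=\frac{nz_{m,j}}{p^m}+nt_j,
\]
so the fractional parts coincide coordinatewise. Thus every point of $\mathcal P_m$ appears in $\mathcal P_{m+1}$, namely as the points with indices divisible by $p$. This gives the inclusion as unlabeled sets; as labeled families the indices are relabeled via $n\mapsto pn$.

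No step is a real obstacle. The only point needing care is to justify that $z_{m,j}$ is invertible modulo $p^m$, which uses $\ker\iota_m=\mathfrak p^m$ and $\beta_j\notin\mathfrak p$ (equivalently, $\iota_{\mathfrak p}(\beta_j)\in\ZZ_p^\times$ reduces to a unit). This is also why the statement holds in every positive projection dimension, in contrast to the generic $\gcd$ collapse noted in the introduction.
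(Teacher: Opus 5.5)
Your proposal is correct and follows essentially the same route as the paper. Admissibility makes every $z_{m,j}$ prime to $p$, which gives distinctness in every projection; the paper cites the count $M=N/\gcd(N,z_1,\ldots,z_s)$ from the preliminaries, whereas you check injectivity directly through a single coordinate. Nestedness then comes from $z_{m+1,j}\equiv z_{m,j}\pmod{p^m}$, matching index $n$ at level $m$ with index $pn$ at level $m+1$, exactly as in the paper.
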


\begin{remark}
    In fact, every one-dimensional coordinate projection is the regular grid
    \[ \{0,1/p^m,\ldots,(p^m-1)/p^m\}. \]
    Hence its Euclidean separation and covering radii are $1/(2p^m)$ and $1/p^m$, respectively, and its mesh ratio is identically $2$.
\end{remark}

\begin{proof}[Proof of \Cref{prop:nestedness}]
Admissibility gives $p\nmid z_{m,j}$ for every $j$.  Hence $\gcd(p^m,z_{m,j_1},\ldots,z_{m,j_s})=1$ for every nonempty set of coordinates $\{j_1,\ldots,j_s\}$.  The distinctness statement follows from \eqref{eq:rank-one-determinant} and the discussion preceding it.

Compatibility of the residue maps gives $z_{m+1,j}\equiv z_{m,j}\pmod{p^m}$.  Therefore, for any $0\leq n<p^m$, we have
\[
 \left\{\frac{pn z_{m+1,j}}{p^{m+1}}\right\}
 =\left\{\frac{n z_{m+1,j}}{p^m}\right\}
 =\left\{\frac{n z_{m,j}}{p^m}\right\}
\]
for every coordinate $j$.  Thus the point with index $n$ at level $m$
coincides with the point with index $pn$ at level $m+1$, proving $\mathcal{P}_m(L,\mathfrak p,V) \subset \mathcal{P}_{m+1}(L,\mathfrak p,V)$.
\end{proof}

The nested family can be converted into a single infinite sequence by ordering first the points of $\mathcal{P}_1$, then the points of $\mathcal{P}_2\setminus\mathcal{P}_1$, and so on.  For instance, one may use lexicographic order within every newly added shell. The first $p^m$ points of the resulting sequence are exactly $\mathcal{P}_m$. Alternatively, one may order the points of $\mathcal{P}_{m+1}\setminus\mathcal{P}_m$ by farthest-point sampling, which greedily favors well-separated intermediate point sets and may improve their covering properties for $p^m<N<p^{m+1}$. 

For a coordinate pair $i<j$, define the projected dual congruence lattice
\begin{align}\label{eq:nested-projected-dual}
 \Gamma_{m}^{(i,j)}
 \coloneqq \left\{(h_1,h_2)\in\mathbb{Z}^2:
 h_1z_{m,i}+h_2z_{m,j}\equiv0\pmod{p^m}\right\}.
\end{align}
Since $z_{m,i}$ is invertible modulo $p^m$, this is the normal-form lattice
$\Gamma_{p^m,a_{m,ij}}$ from \eqref{eq:dual-congruence-lattice}, where $ a_{m,ij}\equiv z_{m,j}z_{m,i}^{-1}\pmod{p^m}$. In particular, $ \det\left(\Gamma_m^{(i,j)}\right)=p^m$.

\subsection{Algebraic lower bounds for the projected dual minima}
\label{subsec:quadratic-dual-minima}

The key is that a dual congruence at level $m$ forces divisibility by
$\mathfrak p^m$ in the quadratic field.

\begin{lemma}\label{lem:quadratic-norm-divisibility}
Let $i<j$, $m\geq1$, and
$(h_1,h_2)\in\Gamma_m^{(i,j)}\setminus\{(0,0)\}$.  Then
\begin{align}\label{eq:quadratic-norm-divisibility}
 p^m\mid
 N_{L/\QQ}(h_1\beta_i+h_2\beta_j),
\end{align}
and the norm on the right-hand side is nonzero.
\end{lemma}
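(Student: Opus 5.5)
Set $\gamma\coloneqq h_1\beta_i+h_2\beta_j\in\Ocal_L$. We first check that $\gamma\neq0$. Suppose $\gamma=0$. Because $(h_1,h_2)\neq(0,0)$ and both $\beta_i,\beta_j$ are nonzero (admissibility puts them outside $\mathfrak p$, so in particular they are nonzero), both $h_1$ and $h_2$ must be nonzero. Then $\beta_i/\beta_j=-h_2/h_1\in\QQ$, which contradicts \eqref{eq:quadratic-projective-independence}, since $v_i\neq v_j$. Hence $\gamma\neq0$, and therefore $N_{L/\QQ}(\gamma)\neq0$.

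Next we show that $\gamma\in\mathfrak p^m$. The residue map $\iota_m$ is a ring homomorphism and $\iota_m(\beta_j)$ has integer representative $z_{m,j}$. Using these two facts,
\[
 \iota_m(\gamma)=h_1\iota_m(\beta_i)+h_2\iota_m(\beta_j)\equiv h_1z_{m,i}+h_2z_{m,j}\equiv0\pmod{p^m},
\]
where the last congruence holds because $(h_1,h_2)\in\Gamma_m^{(i,j)}$. Since $\ker\iota_m=\mathfrak p^m$, as stated after \eqref{eq:quadratic-residue-map}, it follows that $\gamma\in\mathfrak p^m$. This kernel identification is where the splitting of $p$ is used. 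If needed, it can be justified as follows: $\iota_{\mathfrak p}$ extends to the completion $\widehat{\Ocal}_{L,\mathfrak p}\cong\ZZ_p$, the ideal $\mathfrak p$ corresponds to $p\ZZ_p$ because the ramification index is $1$, and $\Ocal_L\cap\mathfrak p^m\widehat{\Ocal}_{L,\mathfrak p}=\mathfrak p^m$.

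Finally we pass to norms. From $\gamma\in\mathfrak p^m$ we get $(\gamma)\subseteq\mathfrak p^m$, so $\mathfrak p^m$ divides $(\gamma)$ in the Dedekind domain $\Ocal_L$. Write $(\gamma)=\mathfrak p^m\mathfrak b$ for an ideal $\mathfrak b$. The absolute norm is multiplicative, and $N(\mathfrak p)=p$ because $p$ splits ($N(\mathfrak p)N(\overline{\mathfrak p})=p^2$ with both factors nontrivial). Therefore
\[
 |N_{L/\QQ}(\gamma)|=N((\gamma))=p^mN(\mathfrak b),
\]
which gives $p^m\mid N_{L/\QQ}(\gamma)$.

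No step here is genuinely difficult. The only point needing care is the identification $\ker\iota_m=\mathfrak p^m$ together with $N(\mathfrak p)=p$; both are standard facts for split primes, and the paper already records the first of them.
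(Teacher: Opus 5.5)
Your proof is correct and follows the paper's argument: $\iota_m(\gamma)=0$ gives $\gamma\in\ker\iota_m=\mathfrak p^m$. Multiplicativity of the absolute norm together with $N(\mathfrak p)=p$ then gives $p^m\mid N_{L/\QQ}(\gamma)$, and $\gamma=0$ would force $\beta_i/\beta_j\in\QQ$. You also fill in details the paper leaves implicit: you rule out $h_1=0$ or $h_2=0$ using $\beta_i,\beta_j\neq0$, and you justify $\ker\iota_m=\mathfrak p^m$ via the completion.
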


\begin{proof}
Set $\gamma=h_1\beta_i+h_2\beta_j$.  By
\eqref{eq:nested-projected-dual} and the definition of the residue maps, it holds that
\[
 \iota_m(\gamma)=0.
\]
Since $\ker(\iota_m)=\mathfrak p^m$, we have
$\gamma\in\mathfrak p^m$.  The prime ideal $\mathfrak p$ has absolute norm
$p$, so the principal ideal $(\gamma)$ has norm divisible by $p^m$, which
proves \eqref{eq:quadratic-norm-divisibility}.  Finally, if $\gamma=0$, then
$\beta_i/\beta_j\in\QQ$, contrary to
\eqref{eq:quadratic-projective-independence}.
\end{proof}

For each pair $i<j$, define the archimedean norm factor
\begin{align}\label{eq:quadratic-archimedean-factor}
 D_{ij}
 \coloneqq \prod_{\ell=1}^2
 \left(|\sigma_\ell(\beta_i)|+|\sigma_\ell(\beta_j)|\right),
\end{align}
and aggregate these constants as
\begin{align}\label{eq:quadratic-algebraic-cost}
 \mathcal{D}_L(V)
 \coloneqq \max_{1\leq i<j\leq d}D_{ij}.
\end{align}

\begin{proposition}\label{prop:quadratic-dual-minimum}
For every $m\geq1$ and every $1\leq i<j\leq d$,
\begin{align}\label{eq:quadratic-dual-minimum}
 \lambda_1\left(\Gamma_m^{(i,j)}\right)
 \geq D_{ij}^{-1/2}p^{m/2}
 \geq \mathcal{D}_L(V)^{-1/2}p^{m/2}.
\end{align}
\end{proposition}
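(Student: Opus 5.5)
Take a nonzero $(h_1,h_2)\in\Gamma_m^{(i,j)}$ and set $\gamma=h_1\beta_i+h_2\beta_j\in\Ocal_L$. The plan is to bound $|N_{L/\QQ}(\gamma)|$ from below using \Cref{lem:quadratic-norm-divisibility} and from above through the two real embeddings, and then compare the two bounds.

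For the lower bound, \Cref{lem:quadratic-norm-divisibility} says that $N_{L/\QQ}(\gamma)$ is a nonzero integer divisible by $p^m$. Hence $|N_{L/\QQ}(\gamma)|\ge p^m$.

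For the upper bound, write $H=\max\{|h_1|,|h_2|\}=\|(h_1,h_2)\|_\infty$. For each $\ell\in\{1,2\}$ the triangle inequality gives
\[
 |\sigma_\ell(\gamma)|\le|h_1||\sigma_\ell(\beta_i)|+|h_2||\sigma_\ell(\beta_j)|\le H\bigl(|\sigma_\ell(\beta_i)|+|\sigma_\ell(\beta_j)|\bigr).
\]
Multiplying the two embeddings yields $|N_{L/\QQ}(\gamma)|=|\sigma_1(\gamma)||\sigma_2(\gamma)|\le H^2D_{ij}$.

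Combining the two bounds gives $H^2D_{ij}\ge p^m$, that is, $H\ge D_{ij}^{-1/2}p^{m/2}$. Since $\|(h_1,h_2)\|_2\ge\|(h_1,h_2)\|_\infty=H$, this lower bound holds for the Euclidean length of every nonzero vector of $\Gamma_m^{(i,j)}$. Taking the infimum over such vectors gives the first inequality in \eqref{eq:quadratic-dual-minimum}. The second inequality is immediate from $D_{ij}\le\mathcal D_L(V)$.

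The only substantive input is the divisibility $p^m\mid N_{L/\QQ}(\gamma)$ together with $\gamma\ne0$, and both are already supplied by \Cref{lem:quadratic-norm-divisibility}. The remaining step is to make sure the archimedean estimate matches the definition \eqref{eq:quadratic-archimedean-factor} of $D_{ij}$. Because the estimate is stated with the $\infty$-norm and $\|\cdot\|_\infty\le\|\cdot\|_2$, it passes directly to the Euclidean minimum, so I do not expect a real obstacle here.
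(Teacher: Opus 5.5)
Your proof is correct and follows the paper's argument essentially step for step: the lower bound $|N_{L/\QQ}(\gamma)|\ge p^m$ from \Cref{lem:quadratic-norm-divisibility}, the archimedean upper bound $|N_{L/\QQ}(\gamma)|\le D_{ij}H^2$, and the passage from $\|\cdot\|_\infty$ to $\|\cdot\|_2$. The second inequality via $D_{ij}\le\mathcal D_L(V)$ is handled exactly as in the paper.
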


\begin{proof}
Let $(h_1,h_2)\in\Gamma_m^{(i,j)}\setminus\{(0,0)\}$ and put
$H\coloneqq \max\{|h_1|,|h_2|\}$.  By
\Cref{lem:quadratic-norm-divisibility},
\[
 p^m
 \leq\left|N_{L/\QQ}(h_1\beta_i+h_2\beta_j)\right|.
\]
On the other hand,
\begin{align*}
 \left|N_{L/\QQ}(h_1\beta_i+h_2\beta_j)\right|
 &=\prod_{\ell=1}^2
 \left|h_1\sigma_\ell(\beta_i)
       +h_2\sigma_\ell(\beta_j)\right|\\
 &\leq D_{ij}H^2.
\end{align*}
It follows that $H\geq D_{ij}^{-1/2}p^{m/2}$.  Since
$\|(h_1,h_2)\|_2\geq H$, taking the minimum over nonzero vectors proves the
claim.
\end{proof}

The inequality \eqref{eq:quadratic-dual-minimum} is the quadratic analogue
of the dual Diophantine estimate
\eqref{eq:cubic-dual-inequality}.  Here it applies directly to a sequence of
finite congruence lattices and has the optimal scale
$\det(\Gamma_m^{(i,j)})^{1/2}=p^{m/2}$.

\subsection{Quasi-uniform bivariate projections}
\label{subsec:quadratic-main-theorem}

We now combine the dual-minimum bound with the planar lattice geometry of
\Cref{subsec:planar-reduction}.  Recall the dimension-two Hermite constant $\eta=(2/\sqrt3)^{1/2}$, used in the proof of \Cref{cor:dual-minimum-quasi-uniformity}. For a pair $i<j$, set
\begin{align}\label{eq:quadratic-Aij-definition}
 A_{ij}\coloneqq \frac{2\sqrt{D_{ij}}}{\sqrt{3}}.
\end{align}

\begin{theorem}
\label{thm:nested-main}
Let $L=\QQ(\omega)$ be a real quadratic field with
$\Ocal_L=\mathbb{Z}[\omega]$, let $p$ split as in
\eqref{eq:quadratic-split-prime}, and fix the $p$-adic embedding
\eqref{eq:quadratic-p-adic-embedding}.  Let
$V=(v_1,\ldots,v_d)$ be a $\mathfrak p$-admissible rational
projective coefficient configuration of size $d\geq2$.  Then the rank-1 lattice point
sets \eqref{eq:nested-point-family} are nested, and all of their
bivariate coordinate projections are quasi-uniform in the Euclidean unit
square, uniformly over the nesting level.  More precisely, for all
$m\geq1$ and $i<j$,
\begin{align}
 q_{ij}\left(\mathcal{P}_m(L,\mathfrak p,V)\right)
 &\geq \frac{1}{2\sqrt{D_{ij}}}\,p^{-m/2},
 \label{eq:nested-main-separation}\\
 h_{ij}\left(\mathcal{P}_m(L,\mathfrak p,V)\right)
 &\leq
 \frac{(1+\sqrt{2})\eta A_{ij}(\eta+A_{ij})}{2}\,p^{-m/2},
 \label{eq:nested-main-covering}\\
 \rho_{ij}\left(\mathcal{P}_m(L,\mathfrak p,V)\right)
 &\leq
 (1+\sqrt{2})A_{ij}(\eta+A_{ij}).
 \label{eq:nested-main-mesh}
\end{align}
In particular,
\begin{align}\label{eq:nested-uniform-mesh}
 \sup_{m\geq1}\max_{1\leq i<j\leq d}
 \rho_{ij}\left(\mathcal{P}_m(L,\mathfrak p,V)\right)<\infty.
\end{align}
\end{theorem}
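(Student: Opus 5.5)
Nestedness is already \Cref{prop:nestedness}, so the work lies in the three bounds for a fixed pair $i<j$ and level $m$, with $N=p^m$. First I reduce to the normal form of \Cref{subsec:planar-reduction}. Admissibility gives $p\nmid z_{m,i}$, so $z_{m,i}$ is invertible modulo $p^m$. Writing $a=a_{m,ij}\equiv z_{m,j}z_{m,i}^{-1}\pmod{p^m}$, the indexed projection $\mathcal P_m^{(i,j)}$ is the rank-1 lattice $P_{N,(z_{m,i},z_{m,j})}$, which is $P_{N,(1,a)}$ after the reindexing $n\mapsto nz_{m,i}$ modulo $N$. Reindexing changes neither separation nor covering radii, and its dual lattice is exactly $\Gamma_m^{(i,j)}=\Gamma_{N,a}$. \Cref{prop:nestedness} shows there are no collisions, so all quantities are finite.

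Next I feed \Cref{prop:quadratic-dual-minimum} into \Cref{cor:dual-minimum-quasi-uniformity} with $c=D_{ij}^{-1/2}$. This gives $\lambda_1(\Gamma_{N,a})\ge c\sqrt N$, so
\[
 q_{ij}\ge q_{\rm per}=\frac{\lambda_1(\Gamma_{N,a})}{2N}\ge \frac{1}{2\sqrt{D_{ij}}}p^{-m/2},
\]
which is \eqref{eq:nested-main-separation}. For the covering radius I follow the corollary's proof with these constants. Take a Gauss-reduced basis $(\bsb_1,\bsb_2)$. Hermite gives $\|\bsb_1\|_2\le\eta\sqrt N$. From $\|\bsb_1\|_2\|\bsb_2\|_2\sin\angle(\bsb_1,\bsb_2)=N$, together with $\sin\angle\ge\sqrt3/2$ and $\|\bsb_1\|_2\ge c\sqrt N$, I get $\|\bsb_2\|_2\le 2\sqrt N/(\sqrt3 c)=A_{ij}\sqrt N$, matching \eqref{eq:quadratic-Aij-definition}. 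The triangle inequality gives $\|\bsb_2-\bsb_1\|_2\le(\eta+A_{ij})\sqrt N$. Inserting these into \eqref{eq:exact-periodic-covering} yields $h_{\rm per}\le \eta A_{ij}(\eta+A_{ij})N^{-1/2}/2$. \Cref{lem:boundary-transfer} with $s=2$ then multiplies this by $1+\sqrt2$, giving \eqref{eq:nested-main-covering}.

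For the mesh ratio, simply dividing the two bounds would give the weaker constant $(1+\sqrt2)\eta A_{ij}(\eta+A_{ij})\sqrt{D_{ij}}$. To get \eqref{eq:nested-main-mesh}, I instead use $\rho_{ij}\le(1+\sqrt2)\rho_{\rm per}$ from \Cref{lem:boundary-transfer}, which needs $q_{\rm per}>0$ and that holds here. By \eqref{eq:exact-periodic-mesh}, $\rho_{\rm per}=\|\bsb_2\|_2\|\bsb_2-\bsb_1\|_2/N$. The estimates above bound this by $A_{ij}(\eta+A_{ij})$, which is exactly the constant required.

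Finally, \eqref{eq:nested-uniform-mesh} follows because the bound in \eqref{eq:nested-main-mesh} is independent of $m$ and there are finitely many pairs. The only delicate point is the bookkeeping above: the mesh-ratio bound must come from the exact periodic formula, not from combining the separate $q$ and $h$ bounds. The sign convention of the Gauss-reduced basis matters only through the Delaunay/circumradius formula already established in \Cref{prop:exact-periodic-mesh-ratio}.
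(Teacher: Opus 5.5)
Your proposal is correct and follows the paper's proof essentially step for step. You reduce each projection to the normal form $P_{p^m,(1,a_{m,ij})}$, feed \Cref{prop:quadratic-dual-minimum} into \Cref{cor:dual-minimum-quasi-uniformity} with $c=D_{ij}^{-1/2}$, and obtain the mesh-ratio bound by bounding $\rho_{\rm per}$ via \eqref{eq:exact-periodic-mesh} and then applying \Cref{lem:boundary-transfer}, rather than by dividing the separate $q$ and $h$ bounds, exactly as the paper does.
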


\begin{proof}
Nestedness was proved in \Cref{prop:nestedness}.  For a fixed pair $i<j$,
its projected point set is, after reindexing, the normal-form rank-1 lattice
point set $P_{p^m,(1,a_{m,ij})}$.  By
\Cref{prop:quadratic-dual-minimum}, its dual congruence lattice satisfies
\[
 \lambda_1\left(\Gamma_m^{(i,j)}\right)
 \geq c_{ij}\sqrt{p^m},
 \quad
 c_{ij}\coloneqq D_{ij}^{-1/2}.
\]
Applying \Cref{cor:dual-minimum-quasi-uniformity} with $N=p^m$ and $c=c_{ij}$ gives the stated bounds on $q_{ij}$ and $h_{ij}$. Moreover, the estimates in the proof of that corollary and \Cref{prop:exact-periodic-mesh-ratio} give
\[
 \rho_{\rm per}(P_m^{(i,j)})
 \le A_{ij}(\eta+A_{ij}).
\]
Hence, by \Cref{lem:boundary-transfer},
\[
 \rho_{ij}(P_m(L,\mathfrak{p},V))
 \le (1+\sqrt2)A_{ij}(\eta+A_{ij}).
\]
Taking the maximum over the finitely many coordinate pairs proves \eqref{eq:nested-uniform-mesh}.
\end{proof}

\begin{remark}
\label{rem:nested-periodic-euclidean}
The norm argument controls the intrinsic shape of the projected periodic
lattice.  In fact, a Gauss-reduced basis of
$\Gamma_m^{(i,j)}$ gives the exact periodic mesh ratio through
\eqref{eq:exact-periodic-mesh}.  The factor $1+\sqrt{2}$ in
the upper bounds on $h_{ij}$ and $\rho_{ij}$ enters only
when the periodic covering estimate is transferred to the ordinary unit
square.  For quantitative optimization we shall therefore distinguish the
exact periodic objective from the boundary-sensitive Euclidean objective.
\end{remark}

\subsection{Hensel lifts and a fully explicit family}
\label{subsec:hensel-explicit-family}

The $p$-adic construction can be implemented using only modular arithmetic.
Let
\begin{align}\label{eq:quadratic-minimal-polynomial}
 f(X)=X^2-TX+U\in\mathbb{Z}[X]
\end{align}
be the minimal polynomial of $\omega$.  Since $p$ splits in $L$,
$f$ has two distinct roots modulo $p$.  Choose one of them, say
$r_1\in\{0,\ldots,p-1\}$.  Hensel's lemma gives a unique
$r\in\mathbb{Z}_p$ satisfying
\begin{align}\label{eq:hensel-p-adic-root}
 f(r)=0,
 \quad
 r\equiv r_1\pmod p,
\end{align}
and the corresponding embedding is characterized by
$\iota_{\mathfrak p}(\omega)=r$.  If $r_m$ denotes the residue of $r$
modulo $p^m$, then
\begin{align}\label{eq:hensel-root-compatibility}
 f(r_m)\equiv0\pmod{p^m},
 \quad
 r_{m+1}\equiv r_m\pmod{p^m}.
\end{align}
More explicitly,
\begin{align}\label{eq:hensel-recursion}
 r_{m+1}=r_m+t_mp^m,
 \quad
 t_m\equiv
 -\frac{f(r_m)}{p^m}\left(f'(r_m)\right)^{-1}
 \pmod p,
\end{align}
where $t_m\in\{0,\ldots,p-1\}$.  For the coefficient
$\beta_j=a_j+b_j\omega$, one then has
\begin{align}\label{eq:hensel-generator-components}
 z_{m,j}\equiv a_j+b_jr_m\pmod{p^m}.
\end{align}
Thus, the nested generating vectors can be computed recursively without
performing arithmetic in a number-field package.

A canonical example is provided by the golden-ratio field.  Let
\begin{align}\label{eq:golden-field-data}
 L=\QQ(\sqrt{5}),
 \quad
 \omega=\frac{1+\sqrt{5}}{2},
 \quad
 f(X)=X^2-X-1.
\end{align}
The prime $p=11$ splits, and $r_1=4$ is a simple root of $f$ modulo $11$.
Let $r_m$ be its compatible Hensel lifts.  Choose
\begin{align}\label{eq:golden-beta-family}
 \beta_j\coloneqq \omega^{j-1},
 \quad j=1,\ldots,d.
\end{align}
These are global units, so the configuration is admissible at every prime.
Moreover, the associated projective directions are pairwise distinct.  For
$j\geq2$,
\begin{align}\label{eq:golden-fibonacci-coefficients}
 \omega^{j-1}=F_{j-2}+F_{j-1}\omega,
\end{align}
where $F_0=0$, $F_1=1$, and
$F_{n+1}=F_n+F_{n-1}$.  Consecutive Fibonacci numbers are coprime, so these
are already primitive coefficient representatives.

\begin{corollary}
\label{cor:golden-nested-family}
For every $d\geq2$, let $r_m$ be determined by
\eqref{eq:golden-field-data} and
\eqref{eq:hensel-root-compatibility}, with $r_1=4$.  Then the generating
vectors
\begin{align}\label{eq:golden-generating-vector}
 \boldsymbol{z}_m
 =\left(1,r_m,r_m^2,\ldots,r_m^{d-1}\right)
 \pmod{11^m}
\end{align}
define nested rank-1 lattice point sets with quasi-uniform
two-dimensional coordinate projections at every level $m\geq1$.
\end{corollary}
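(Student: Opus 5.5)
The corollary is a direct specialization of \Cref{thm:nested-main}, so the plan is to verify each hypothesis for the data \eqref{eq:golden-field-data} and \eqref{eq:golden-beta-family}, and then identify the generating vector \eqref{eq:golden-generating-vector} with the vector $\bsz_m$ of \eqref{eq:nested-generator-vector}.

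\textbf{Field and prime.} With $\omega=(1+\sqrt5)/2$ one has $\Ocal_L=\ZZ[\omega]$, since $5\equiv1\pmod 4$. The discriminant of $f$ is $5$. Because $11\equiv1\pmod 5$, the Legendre symbol $(5/11)=1$, so $11$ splits in $L$. Concretely, $f(4)=11\equiv0\pmod{11}$ and $f'(4)=7\not\equiv0\pmod{11}$, so $r_1=4$ is a simple root. Hensel's lemma then gives the $11$-adic root $r$ and the embedding $\iota_{\mathfrak p}$ with $\iota_{\mathfrak p}(\omega)=r$, where $\mathfrak p=(11,\omega-4)$. The residues $r_m$ satisfy \eqref{eq:hensel-root-compatibility}.

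\textbf{Configuration and admissibility.} By \eqref{eq:golden-fibonacci-coefficients}, the direction attached to $\beta_j$ is $v_j=[F_{j-2}:F_{j-1}]$ for $j\ge2$, and $v_1=[1:0]$. Distinctness follows from Cassini's identity: the determinant $F_{i-2}F_{j-1}-F_{j-2}F_{i-1}$ equals $\pm F_{j-i}$, which is nonzero for $i<j$. Alternatively, $\omega^{i-1}/\omega^{j-1}=\omega^{i-j}$ is irrational for $i\ne j$, and distinctness then follows from \eqref{eq:quadratic-projective-independence}. The representatives are primitive because consecutive Fibonacci numbers are coprime. The one sign issue is the canonical normalization $a>0$ or $(a,b)=(0,1)$. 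It causes no trouble: for $j=2$ the pair $(0,1)$ is already canonical, and for $j\ge3$ we have $F_{j-2}>0$. Since $\beta_j=\omega^{j-1}$ is a global unit, $\beta_j\notin\mathfrak p$, so $V$ is $\mathfrak p$-admissible.

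\textbf{Identification of the generator.} Since $\iota_m$ is a ring homomorphism, $\iota_m(\omega^{j-1})\equiv r_m^{j-1}\pmod{11^m}$. Hence $z_{m,j}$ is the reduction of $r_m^{j-1}$ modulo $11^m$, which matches \eqref{eq:golden-generating-vector}; this is consistent with \eqref{eq:hensel-generator-components}. \Cref{thm:nested-main} then yields nestedness and \eqref{eq:nested-uniform-mesh}, with explicit bounds in terms of $D_{ij}$.

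The only step needing any care is the consistency check between $\beta_j$ and its canonical primitive representative, since a sign change would alter $\beta_j$. I expect no real obstacle here: the theorem depends only on the $\beta_j$ themselves through the norms and $\iota_{\mathfrak p}$, and as noted above the representatives $(F_{j-2},F_{j-1})$ are already canonical.
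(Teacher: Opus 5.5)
Your proposal is correct and follows the same route as the paper. You identify $z_{m,j}$ with $r_m^{j-1}$ through the ring homomorphism $\iota_m$, use $\omega^{i-j}\notin\QQ$ for distinctness and the fact that each $\omega^{j-1}$ is a global unit for admissibility, and then invoke \Cref{thm:nested-main}. Your extra checks (that $11$ splits, that $r_1=4$ is a simple root, and that the Fibonacci representatives carry the canonical signs) are handled in the paper's surrounding text rather than in its proof. One small correction: the identity giving the determinant $\pm F_{j-i}$ is d'Ocagne's identity, not Cassini's, and for $i=1$ it needs the convention $F_{-1}=1$. Your irrationality argument already covers distinctness, so this does not affect the proof.
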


\begin{proof}
The residue of $\beta_j=\omega^{j-1}$ under the chosen embedding is
$r_m^{j-1}$ modulo $11^m$, so
\eqref{eq:golden-generating-vector} is a special case of
\eqref{eq:nested-generator-vector}.  The ratio
$\beta_i/\beta_j=\omega^{i-j}$ is not rational for $i\neq j$, and every
$\beta_j$ is a global unit.  The result follows from
\Cref{thm:nested-main}.
\end{proof}

The family \eqref{eq:golden-generating-vector} is completely explicit and
available in arbitrary dimension.  Its special form also reduces the
number of distinct pair types: multiplication of both generating components
by the unit $r_m^{-(i-1)}$ modulo $11^m$ merely reindexes the points and
shows that the $(i,j)$-projection is identical, as an unlabeled rank-1
lattice point set, to the two-dimensional generator
\begin{align}\label{eq:golden-gap-projection}
 (1,r_m^{j-i})\pmod{11^m}.
\end{align}
Thus both its periodic and Euclidean projected geometry depend only on the
coordinate gap $j-i$.  This simplification is useful computationally,
although the family is not
claimed to minimize the worst projected mesh ratio.

\begin{remark}
\label{rem:nested-not-full-dimensional}
Any three elements $\beta_i,\beta_j,\beta_k$ lie in the two-dimensional
$\QQ$-vector space $L$.  Hence there is a nonzero integer vector
$(h_i,h_j,h_k)$ such that
\[
 h_i\beta_i+h_j\beta_j+h_k\beta_k=0.
\]
Applying every residue map gives
\[
 h_i z_{m,i}+h_j z_{m,j}+h_k z_{m,k}\equiv0\pmod{p^m}
 \quad(m\geq1).
\]
Thus, each three-dimensional projected dual lattice contains a fixed nonzero vector, independent of $m$.  Equivalently, all points of that projection lie on the proper subtorus defined by $h_i x_i+h_j x_j+h_k x_k\equiv0\pmod1$.  Its Euclidean covering radius is
therefore bounded below by a positive constant independent of $m$, and the three-dimensional projection cannot be quasi-uniform as $m\to\infty$. The quadratic-field construction is designed specifically for uniform control of all bivariate projections.
\end{remark}


\section{Extremal projective coefficient configurations}\label{sec:optimization}

\Cref{thm:kronecker-main,thm:nested-main} apply to every admissible configuration of distinct projective directions, but the resulting constants can vary substantially. We now formulate their quantitative selection as a pairwise bottleneck problem. The algebraic criteria below certify boundedness for all initial segments or levels; finite-range geometric criteria are used to distinguish candidates more sharply.

\subsection{Finite projective search spaces}

For $B\ge1$, let $\mathcal R_B$ be the bounded-height set in \Cref{def:projective-configuration} and let $\mathcal V_{d,B}=\{V\subset\mathcal R_B:|V|=d\}$. An elementary count gives
\begin{align}\label{eq:projective-height-count}
 |\mathcal R_B|=4\sum_{n=1}^B\varphi(n)=\frac{12}{\pi^2}B^2+O(B\log B),
\end{align}
so a pool of $d$ directions is available with $B=O(\sqrt d)$. In the quadratic construction, if $r_1=\iota_{\mathfrak p}(\omega)\pmod p$, admissibility removes precisely the residue direction $[-r_1:1]\in\PP^1(\FF_p)$; we denote the remaining candidates by $\mathcal R_{B,\mathfrak p}$.

Let $X$ be a finite candidate set and let $\ell(v,w)$ be a symmetric pair cost. For $V\subset X$, define $\mathcal L_\ell(V)=\max_{\{v,w\}\subset V}\ell(v,w)$ and minimize this quantity over $|V|=d$. For any threshold $\tau$, feasible configurations are exactly the $d$-cliques in the graph joining pairs with $\ell(v,w)\le\tau$. Thus the finite minimax problem can be solved exactly by a threshold-clique search when the candidate pool is moderate, and by deterministic greedy and exchange procedures otherwise.

A field-independent guide is the projective angle. For primitive representatives $v=[a:b]$ and $w=[c:e]$, set
\[
 \vartheta(v,w)=\arccos\frac{|ac+be|}{\sqrt{a^2+b^2}\sqrt{c^2+e^2}},
 \quad
 \chi(v,w)=\frac1{\sin\vartheta(v,w)}.
\]
If $\delta(V)=\min_{v\ne w}\vartheta(v,w)$, then
\begin{align}\label{eq:projective-angular-extremal-value}
 \sup_{|V|=d}\delta(V)=\frac{\pi}{d},
 \quad
 \inf_{|V|=d}\max_{v\ne w}\chi(v,w)=\frac1{\sin(\pi/d)}.
\end{align}
Indeed, the $d$ cyclic gaps on $\PP^1(\RR)$ sum to $\pi$, and rational directions are dense. For $d=4$, the cross configuration $([1:0],[0:1],[1:1],[1:-1])$ attains the optimum. This angular criterion does not determine the mesh ratio, but it explains why approximately equispaced directions are natural initial candidates.

\subsection{The nested-lattice objective}

Fix $(L,\mathfrak p)$ and a finite level set $\mathcal I$. For $v=[a:b]\in\mathcal R_{B,\mathfrak p}$, write $\beta(v)=a+b\omega$ and $z_m(v)=\iota_m(\beta(v))$. For $v\ne w$, define
\begin{align}\label{eq:nested-finite-periodic-pair-cost}
 \ell^{\rm per}_{L,\mathfrak p,\mathcal I}(v,w)
 =\max_{m\in\mathcal I}\rho_{\rm per}\left(P_{p^m,(z_m(v),z_m(w))}\right),
 \quad
 \mathcal M^{\rm per}_{L,\mathfrak p,\mathcal I}(V)
 =\max_{\{v,w\}\subset V}\ell^{\rm per}_{L,\mathfrak p,\mathcal I}(v,w).
\end{align}
Because $z_m(v)$ is invertible, the pair is equivalent to the slope $a_m\equiv z_m(w)z_m(v)^{-1}\pmod{p^m}$. Gauss reduction of the corresponding basis in \eqref{eq:dual-congruence-lattice}, followed by \Cref{prop:exact-periodic-mesh-ratio}, evaluates every edge cost exactly up to a square root and without generating the point set.

The periodic objective has the universal benchmark
\begin{align}\label{eq:planar-lattice-benchmark}
 \rho_{\rm per}\ge\frac{2}{\sqrt3},
\end{align}
with equality exactly for the triangular lattice shape. To see this, write $u=\|\bsb_2\|_2/\|\bsb_1\|_2\ge1$ and $t=(\bsb_1\cdot\bsb_2)/\|\bsb_1\|_2^2\in[0,1/2]$ for a Gauss-reduced basis. The formula in \Cref{prop:exact-periodic-mesh-ratio} becomes $u\sqrt{u^2+1-2t}/\sqrt{u^2-t^2}$, whose minimum is attained at $(u,t)=(1,1/2)$.

The algebraic certificate for a pair is $D_L(v,w)=\prod_{\ell=1}^2(|\sigma_\ell(\beta(v))|+|\sigma_\ell(\beta(w))|)$. By \Cref{prop:quadratic-dual-minimum} and \Cref{cor:dual-minimum-quasi-uniformity}, $\max_{\{v,w\}\subset V}D_L(v,w)$ yields an all-level upper bound for the projected mesh ratios. It is inexpensive and rigorous, but the exact periodic objective \eqref{eq:nested-finite-periodic-pair-cost} is usually much more discriminating.

\subsection{The Kronecker objective}

Fix $K=\QQ(\theta)$ and write $\alpha(v)=a\theta+b\theta^2$ for $v=[a:b]$. Direct periodic or Euclidean mesh ratios can be computed for a finite set $\mathcal N$ of initial-segment lengths, but doing so for every pair in a large pool is expensive. We therefore first use the truncated Diophantine quantities
\begin{align*}
 c_{{\rm sim},Q}(v,w) & = \min_{1\le q\le Q}q^{1/2}\max\{\|q\alpha(v)\|,\|q\alpha(w)\|\},\\
 c_{{\rm dual},H}(v,w) & = \min_{0<\|(h_1,h_2)\|_\infty\le H}\|(h_1,h_2)\|_\infty^2\|h_1\alpha(v)+h_2\alpha(w)\|,
\end{align*}
and the loss
\begin{align}\label{eq:kronecker-diophantine-pair-loss}
 \ell^{\rm dio}_{K;Q,H}(v,w)=\max\{c_{{\rm sim},Q}(v,w)^{-1},c_{{\rm dual},H}(v,w)^{-1}\}.
\end{align}
This score reflects the roles of simultaneous separation and dual covering. It is a screening criterion rather than an infinite-horizon certificate. The latter is supplied by the algebraic cost $\mathcal C_K(V)$ from \Cref{sec:kronecker}; direct mesh ratios over $N\in\mathcal N$ are then used to refine a shortlist.

In both constructions, the search therefore has the same structure: enumerate bounded-height candidates, compute pair costs, solve the finite bottleneck problem, enlarge the training range for the best configurations, and finally evaluate the Euclidean mesh ratios in $[0,1)^2$. The periodic mesh ratio, when evaluated, gives a certified Euclidean bound through \Cref{lem:boundary-transfer}, although boundary effects can change the ranking of the final candidates. Computational details and results are given in \Cref{sec:numerics}.

\section{Numerical experiments}\label{sec:numerics}

Finally, we examine the quantitative effect of the projective coefficient
configuration. The experiments have the following two purposes. First, we solve the finite bottleneck problems introduced in \Cref{sec:optimization} over a common bounded-height pool and compare the resulting configurations with the elementary explicit families of \Cref{sec:kronecker,sec:nested}.
Second, for representative dimensions, we compute the periodic and
boundary-sensitive mesh ratios directly. All computations use the
canonical primitive representatives from \Cref{def:projective-configuration}, and no shift is applied to the point sets.

\subsection{Experimental setting}

For the cubic-field Kronecker construction, we take $\theta$ to be the plastic constant, that is, the real root of $x^3-x-1$, and use the candidate set $\mathcal R_8$, which contains $88$ rational directions. The pair loss is \eqref{eq:kronecker-diophantine-pair-loss} with $Q=10^4$ and $H=40$. For the quadratic-field nested lattice construction, we use the golden-ratio field \eqref{eq:golden-field-data}, the prime $p=11$, and the root $r_1=4\pmod {11}$. Excluding the candidates whose reduction modulo $11$ equals the unique non-unit residue direction leaves $79$ candidates of height at most $8$. The exact periodic objective \eqref{eq:nested-finite-periodic-pair-cost} is evaluated over the levels $m=1,\ldots,7$, corresponding to as many as $11^7=19\,487\,171$ points. For each ambient dimension $d\in\{4,8,12,20,24,32\}$, the finite minimax problem is solved by the threshold-clique formulation of \Cref{sec:optimization}.

We compare the optimized Kronecker configuration with the one-parameter family
\[
 V_d^{\rm lin}=\{[1:0],[1:1],\ldots,[1:d-1]\},
\]
The nested lattice reference is the Hensel--Korobov family of \Cref{cor:golden-nested-family}; its projective coefficients are consecutive Fibonacci numbers.  Both references are completely explicit and available in every dimension, but were not chosen to optimize the worst bivariate projection.

The Kronecker loss is a finite Diophantine screening criterion, not a mesh ratio and not an infinite-horizon certificate.  The latter is provided by \Cref{thm:kronecker-main}.  By contrast, every nested-lattice edge cost used in the optimization is the exact periodic mesh ratio at the selected levels. Direct geometric validation is carried out for $d=4,8$.  For the Kronecker sequences, all coordinate pairs are evaluated at $N\in\{128,256,512,1024,2048,4096\}$.  For the nested construction, all periodic ratios are evaluated at $m=1,\ldots,7$.  The Euclidean ratios in the ordinary square are evaluated for all pairs at $m=3$; at $m=4$ they are evaluated for all pairs when $d=4$ and for the periodically worst pair when $d=8$.  This last restriction avoids repeating a Voronoi computation with $14\,641$ sites for every one of the $28$ coordinate pairs.

\subsection{Finite bottleneck optimization}

\Cref{tab:numerical-dimension-summary} gives the optimized finite bottleneck values and the corresponding reference values. The factor columns report the ratio of the reference value to the optimized value. The Kronecker optimization reduces the finite Diophantine loss by factors between $2.53$ and $7.77$. For the nested lattice construction, the Hensel--Korobov family is already optimal for the present finite problem when $d=4$, whereas the optimized configurations improve its worst periodic mesh ratio by factors between $1.69$ and $2.53$ for $d=8,\ldots,32$.

\begin{table}[H]
\centering
\caption{Finite bottleneck values over the height-eight candidate pools.
The Kronecker columns contain the loss
$\mathcal L_{\ell^{\rm dio}_{K;Q,H}}$, whereas the nested columns contain
the maximum exact periodic mesh ratio over $m=1,\ldots,7$.}
\label{tab:numerical-dimension-summary}
\small
\begin{tabular}{r@{\quad}|rrr@{\quad}rrr}
\hline
dimension & \multicolumn{3}{c}{Kronecker proxy loss} & \multicolumn{3}{c}{Nested periodic mesh ratio}\\
$d$ & optimized & reference & factor & optimized & reference & factor\\
\hline
4 & 15.72 & 110.73 & 7.04 & 2.56 & 2.56 & 1.00\\
8 & 31.07 & 241.52 & 7.77 & 6.12 & 14.15 & 2.31\\
12 & 51.25 & 241.52 & 4.71 & 7.53 & 14.15 & 1.88\\
20 & 91.73 & 416.93 & 4.55 & 21.84 & 55.16 & 2.53\\
24 & 119.15 & 416.93 & 3.50 & 25.12 & 55.16 & 2.20\\
32 & 165.10 & 416.93 & 2.53 & 32.68 & 55.16 & 1.69\\
\hline
\end{tabular}
\end{table}

For example, the optimized configurations at $d=8$ are
\begin{align*}
 V_{K,8}^{\rm opt}
 &=\{[0:1],[1:-1],[2:-1],[1:-3],
      [2:-3],[3:-2],[6:5],[6:-5]\},\\
 V_{L,8}^{\rm opt}
 &=\{[1:0],[0:1],[2:1],[1:3],
      [3:-2],[4:-3],[5:-3],[7:-4]\}.
\end{align*}
They are neither the equally spaced real projective configuration nor the minimizers of the coarse algebraic certificates alone.  The arithmetic images of the directions therefore matter in addition to their real projective angles.  The increase of the optimized values with $d$ is also consistent with \eqref{eq:projective-angular-extremal-value}: as more directions are placed in a fixed projective line, some coefficient pair must become increasingly ill-conditioned.

\subsection{Direct geometric validation}

The finite Kronecker proxy is intended only to screen coefficient configurations.  We therefore compare it with direct Voronoi-based mesh ratios.  The left panel of \Cref{fig:direct-validation} shows the largest ratio over all bivariate projections for $d=8$.  Over the six validation lengths, the optimized construction has maximum periodic and Euclidean ratios $10.28$ and $16.35$, respectively, compared with $36.30$ and $59.49$ for $V_8^{\rm lin}$.  The improvement is less uniform for $d=4$: the maxima are $8.38$ and $15.76$ for the optimized configuration, against $13.04$ and $17.52$ for the reference.  In particular, minimizing the truncated Diophantine loss does not imply pointwise dominance at every individual value of $N$.

For the nested lattice construction, the right panel of \Cref{fig:direct-validation} displays the exact worst periodic ratio at each level.  When $d=8$, its maximum over $m=1,\ldots,7$ decreases from $14.15$ for the Hensel--Korobov family to $6.12$ for the optimized configuration.  At $m=4$, the Euclidean ratio of the periodically worst pair is $12.16$ for the optimized design and $28.24$ for the reference. For $d=4$, both constructions attain the same finite optimum; their largest periodic ratio over the seven levels is $2.56$, only about $2.22$ times the triangular-lattice benchmark.

\begin{figure}[t]
\centering
\begin{minipage}[t]{0.49\textwidth}
  \centering
  \includegraphics[width=\linewidth]{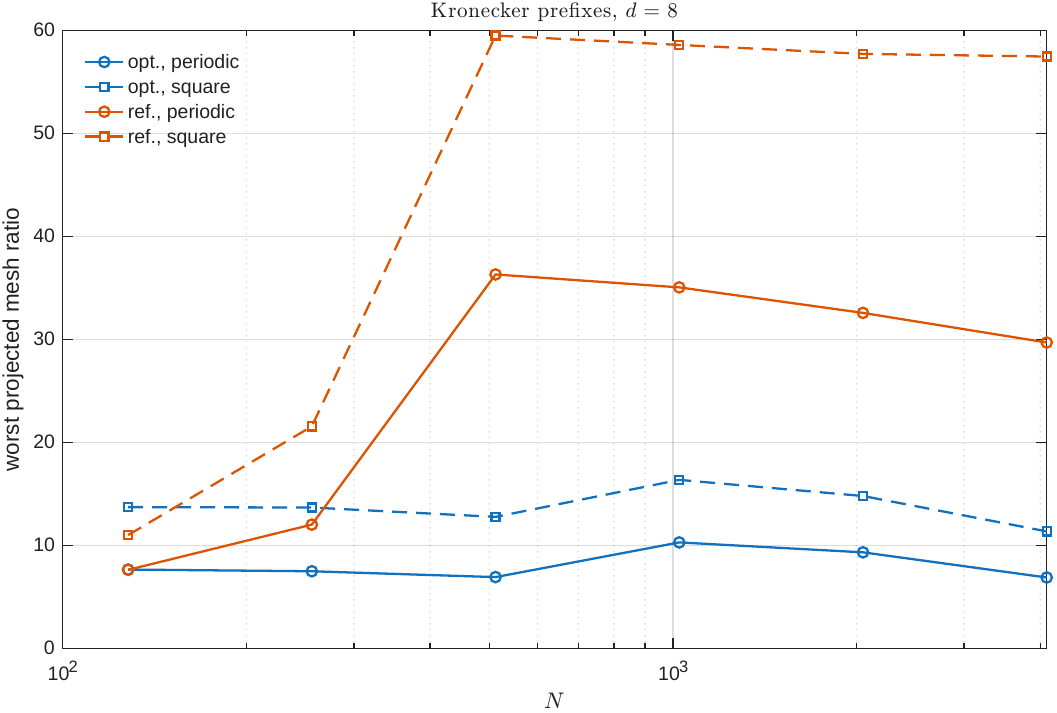}
  \smallskip
  \centerline{\small (a) Kronecker prefixes, $d=8$.}
\end{minipage}\hfill
\begin{minipage}[t]{0.49\textwidth}
  \centering
  \includegraphics[width=\linewidth]{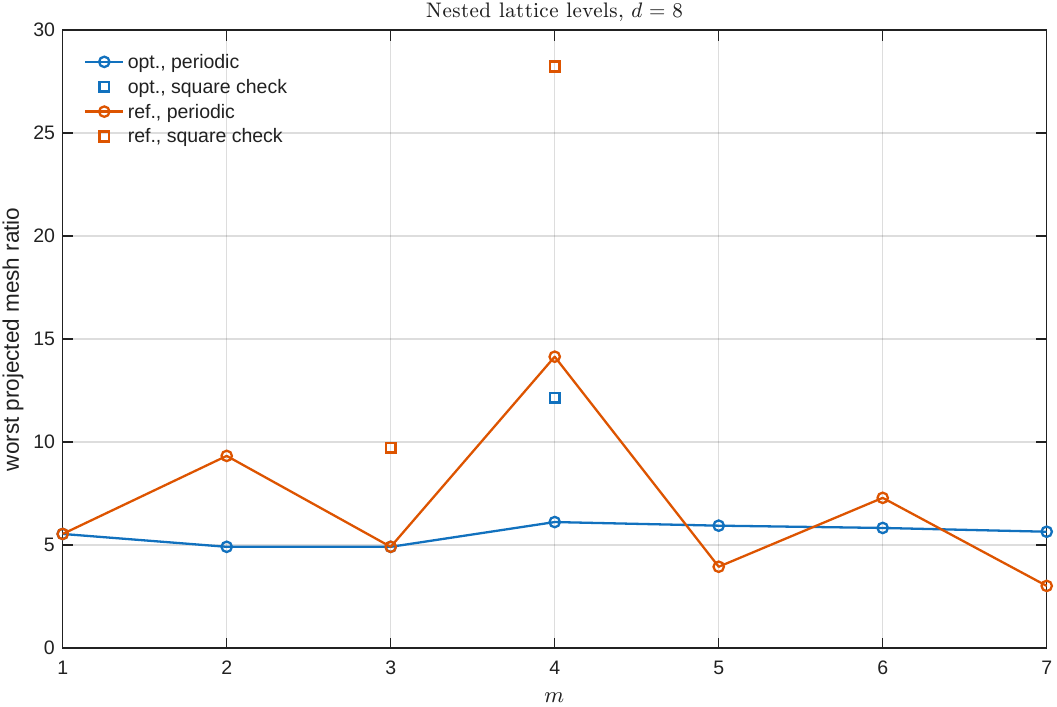}
  \smallskip
  \centerline{\small (b) Nested lattice levels, $d=8$.}
\end{minipage}
\caption{Direct geometric validation in dimension $d=8$. Every periodic and Euclidean value in the left panel is the maximum over all $28$ coordinate pairs. In the right panel, periodic values are also all-pair maxima; square markers at $m=3$ are all-pair maxima, whereas those at $m=4$ evaluate the pair that is worst for the periodic criterion.}
\label{fig:direct-validation}
\end{figure}

The Euclidean values are systematically larger than their periodic counterparts because the unit-square boundary removes neighboring lattice copies. Nevertheless, the optimized and reference constructions are ranked in the same order in the most pronounced $d=8$ comparisons.  This also illustrates the role of \Cref{lem:boundary-transfer}: periodic optimization controls the intrinsic lattice shape and supplies a uniform Euclidean bound, while a final boundary-sensitive check remains useful for quantitative selection.

\subsection{Bivariate projection geometry}

We finally visualize the bivariate geometry of the two constructions in dimension $d=8$. The purpose of these figures is complementary to the scalar worst-case summaries above: they show how the optimization changes the geometry across individual coordinate pairs.

\Cref{fig:kronecker-projection-matrix} displays all $\binom{8}{2}=28$ bivariate coordinate projections of the Kronecker constructions at $N=1024$. The upper triangular panels correspond to the one-parameter reference configuration $V_8^{\rm lin}$, whereas the lower triangular panels correspond to the optimized configuration $V_{K,8}^{\rm opt}$; the diagonal carries no projection. Panels symmetric about the diagonal therefore represent the same unordered coordinate pair, up to an interchange of the two coordinate axes. At this value of $N$, the worst periodic projection is the pair $(3,4)$ for the optimized configuration and the pair $(1,8)$ for the reference configuration.

It is important to emphasize that the optimization is minimax rather than pointwise. Thus, an individual projection of the reference configuration may be more isotropic than the corresponding optimized projection. The objective is instead to reduce the largest mesh ratio over all coordinate pairs. In this sense, the optimized configuration redistributes the unfavorable geometry among the projections and, in particular, avoids the most severe anisotropy responsible for the substantially larger worst-case values reported in the preceding subsection.

\begin{figure}[t]
 \centering
 \includegraphics[width=\textwidth]{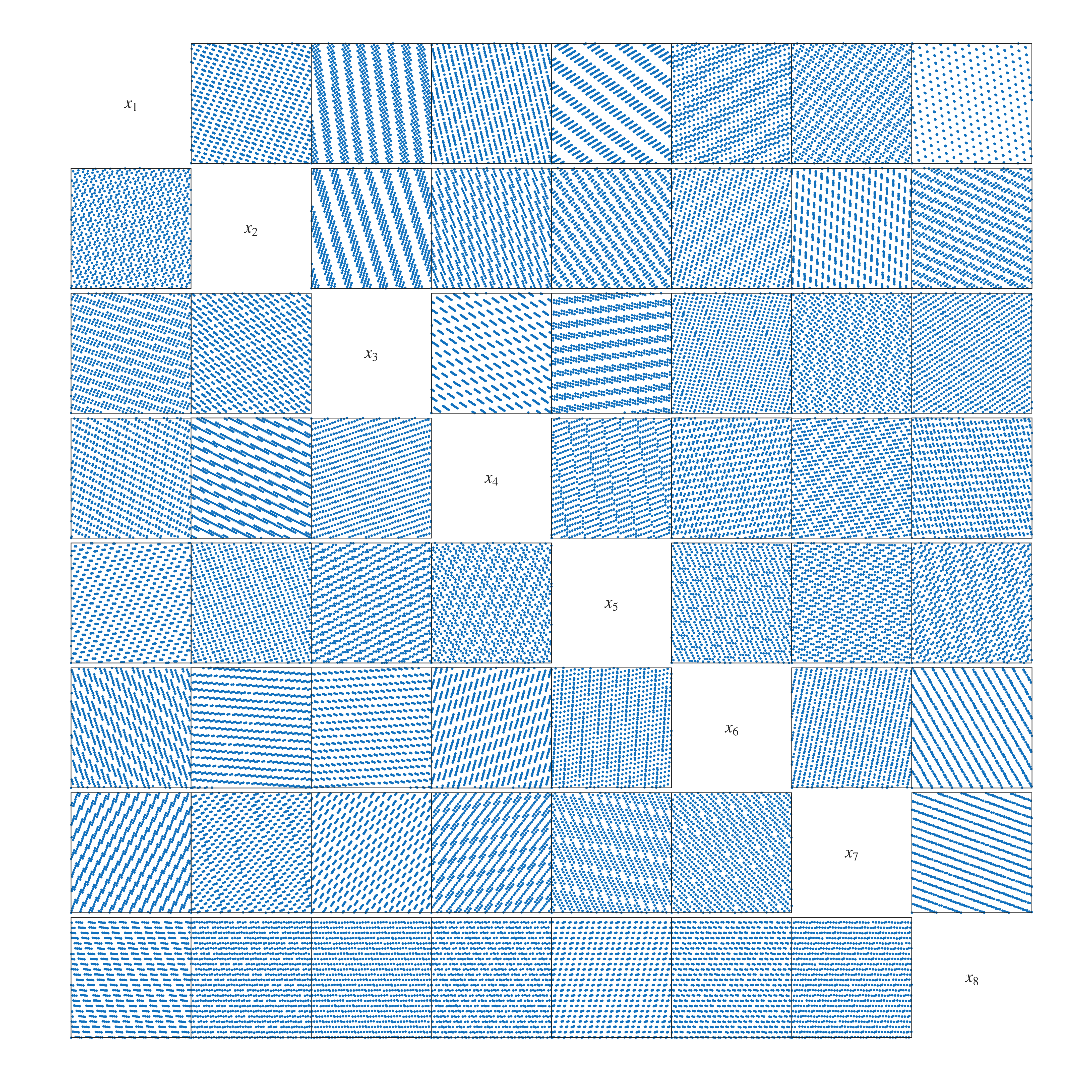}
 \caption{All bivariate coordinate projections of the Kronecker
 constructions in dimension $d=8$ at $N=1024$.  The upper triangular
 panels show the one-parameter reference configuration $V_8^{\rm lin}$,
 whereas the lower triangular panels show the optimized configuration
 $V_{K,8}^{\rm opt}$.  Corresponding panels across the diagonal represent
 the same coordinate pair with the two axes interchanged.}
 \label{fig:kronecker-projection-matrix}
\end{figure}

For the nested lattice construction, an analogous $8\times8$ matrix at level $m=4$ contains $11^4=14\,641$ points in every off-diagonal panel and is too dense to reveal the relevant lattice structure clearly. We therefore focus in \Cref{fig:nested-worst-projections} on the periodically worst projection of each configuration at this level.  The worst pair is $(2,8)$ for the optimized configuration and $(1,8)$ for the Hensel--Korobov reference. The reference projection is concentrated on a relatively small number of widely spaced parallel lines, whereas the optimized projection is distributed over a larger number of more closely spaced lines. This leads simultaneously to smaller uncovered gaps and better point separation, thereby reducing both the covering-to-separation imbalance and the resulting mesh ratio.

\begin{figure}[t]
\centering
\begin{subfigure}[t]{0.47\textwidth}
 \centering
 \includegraphics[width=\textwidth]{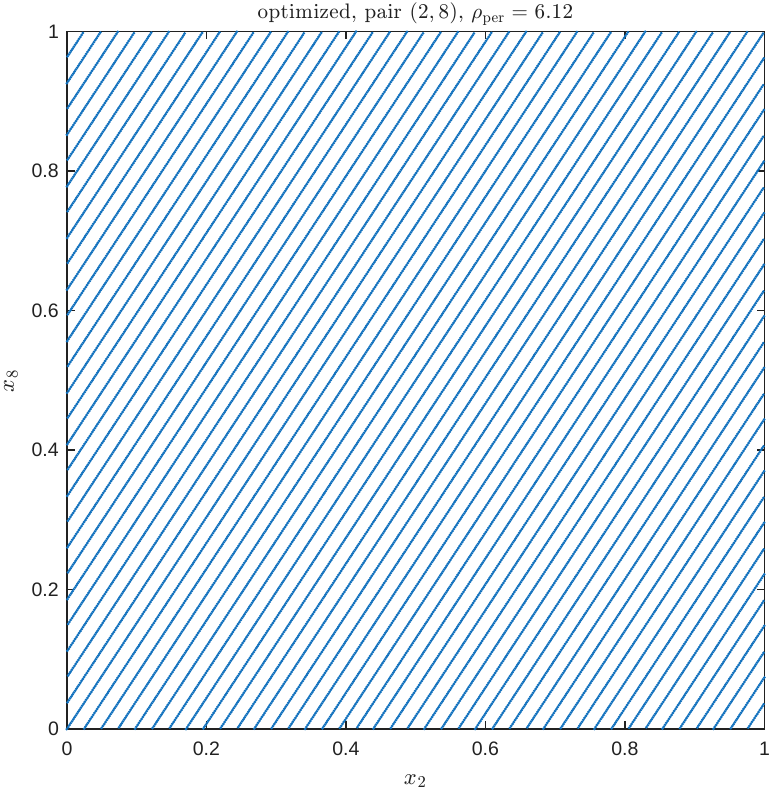}
 \caption{Optimized, pair $(2,8)$.}
\end{subfigure}\hfill
\begin{subfigure}[t]{0.47\textwidth}
 \centering
 \includegraphics[width=\textwidth]{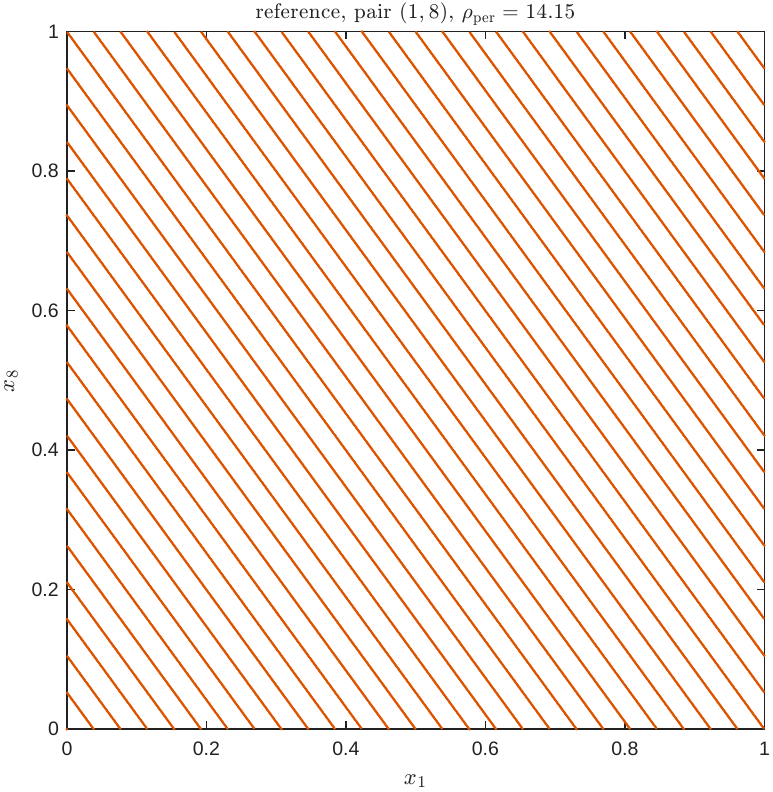}
 \caption{Hensel--Korobov reference, pair $(1,8)$.}
\end{subfigure}
\caption{Worst periodic bivariate projections of the nested lattice
constructions in dimension $d=8$ at level $m=4$, corresponding to
$11^4=14\,641$ points.  The periodic mesh ratios are $6.12$ for the
optimized configuration and $14.15$ for the reference configuration.}
\label{fig:nested-worst-projections}
\end{figure}

\FloatBarrier

\section{Concluding remarks}\label{sec:conclusion}

We have introduced two algebraic constructions of extensible point sets all of whose two-dimensional coordinate projections are quasi-uniform. The first construction uses a cubic number field to generate explicit Kronecker sequences, whereas the second uses a real quadratic field, a split prime, and a compatible $p$-adic embedding to generate nested rank-1 lattice point sets. In both cases, rational projective coefficient configurations provide a common parametrization of the coordinates. Besides simple fully explicit families, we formulated the quantitative selection of these coefficients as a finite bottleneck problem and constructed optimized configurations with substantially improved finite bottleneck values. The numerical experiments further showed, in representative dimensions, corresponding improvements in the worst-projection mesh ratios, both for the intrinsic periodic geometry and for the ordinary Euclidean geometry of the unit square.

The present paper has focused exclusively on two-dimensional coordinate projections. The underlying algebraic mechanism suggests a possible extension to higher-dimensional projections by using number fields of higher degree together with higher-dimensional rational coefficient configurations. For instance, one would naturally replace $\PP^1(\QQ)$ by an appropriate higher-dimensional projective space and impose non-degeneracy conditions on larger subsets of coefficient vectors. Establishing the corresponding Diophantine and lattice-geometric bounds, and determining effective constructions with favorable mesh-ratio constants, are beyond the scope of the present paper and are left for future work.

\section*{Declaration of generative AI use}

The author used ChatGPT (OpenAI, accessed in August 2026) for assistance with mathematical brainstorming, manuscript preparation, and the development and debugging of MATLAB code. All AI-assisted content was independently reviewed and verified by the author. The author assumes responsibility for all content.

\bibliographystyle{siam}
\bibliography{ref.bib}

@article {ABD06,
    AUTHOR = {Adamczewski, Boris and Bugeaud, Yann and Davison, Les},
     TITLE = {Continued fractions and transcendental numbers},
   JOURNAL = {Ann. Inst. Fourier (Grenoble)},
  FJOURNAL = {Universit\'e{} de Grenoble. Annales de l'Institut Fourier},
    VOLUME = {56},
      YEAR = {2006},
    NUMBER = {7},
     PAGES = {2093--2113},
       DOI = {10.5802/aif.2234},
}

@book {C57,
    AUTHOR = {Cassels, J. W. S.},
     TITLE = {An introduction to {D}iophantine Approximation},
    SERIES = {Cambridge Tracts in Mathematics and Mathematical Physics},
    VOLUME = {No. 45},
 PUBLISHER = {Cambridge University Press, New York},
      YEAR = {1957},
     PAGES = {x+166},
}

@article {DGLPS25,
    AUTHOR = {J. Dick and T. Goda and G. Larcher and F. Pillichshammer and K. Suzuki},
     TITLE = {On the quasi-uniformity properties of quasi-{M}onte {C}arlo point sets and sequences -- {P}art~{I}: {L}attices and {K}ronecker sequences},
   JOURNAL = {Math. Comp.},
  FJOURNAL = {Mathematics of Computation},
   YEAR = {to appear},
   NOTE = {Preprint available at \url{https://arxiv.org/abs/2502.06202}}
}

@article {DPW08,
    AUTHOR = {Dick, Josef and Pillichshammer, Friedrich and Waterhouse,
              Benjamin J.},
     TITLE = {The construction of good extensible rank-1 lattices},
   JOURNAL = {Math. Comp.},
  FJOURNAL = {Mathematics of Computation},
    VOLUME = {77},
      YEAR = {2008},
    NUMBER = {264},
     PAGES = {2345--2373},
       DOI = {10.1090/S0025-5718-08-02009-7},
}

@book {FLS06,
    AUTHOR = {Fang, K.-T. and Li, R. and Sudjianto, A.},
     TITLE = {Design and Modeling for Computer Experiments},
    SERIES = {Chapman \& Hall/CRC Computer Science and Data Analysis Series},
 PUBLISHER = {Chapman \& Hall/CRC, Boca Raton, FL},
      YEAR = {2006},
     PAGES = {xii+290},
}

@article {G24b,
    AUTHOR = {Goda, T.},
     TITLE = {One-dimensional quasi-uniform {K}ronecker sequences},
   JOURNAL = {Arch. Math.},
  FJOURNAL = {Archiv der Mathematik},
    VOLUME = {123},
      YEAR = {2024},
    NUMBER = {5},
     PAGES = {499--505},
       DOI = {10.1007/s00013-024-02039-0}
}

@article {H21,
    AUTHOR = {He, Xu},
     TITLE = {Lattice-based designs with quasi-optimal separation distance
              on all projections},
   JOURNAL = {Biometrika},
  FJOURNAL = {Biometrika},
    VOLUME = {108},
      YEAR = {2021},
    NUMBER = {2},
     PAGES = {443--454},
       DOI = {10.1093/biomet/asaa057},
}

@article {JK08,
    AUTHOR = {Joe, Stephen and Kuo, Frances Y.},
     TITLE = {Constructing {S}obol' sequences with better
              two-dimensional projections},
   JOURNAL = {SIAM J. Sci. Comput.},
  FJOURNAL = {SIAM Journal on Scientific Computing},
    VOLUME = {30},
      YEAR = {2008},
    NUMBER = {5},
     PAGES = {2635--2654},
       DOI = {10.1137/070709359},
}

@article {JMY90,
    AUTHOR = {Johnson, M. E. and Moore, L. M. and Ylvisaker, D.},
     TITLE = {Minimax and maximin distance designs},
   JOURNAL = {J. Statist. Plann. Inference},
  FJOURNAL = {Journal of Statistical Planning and Inference},
    VOLUME = {26},
      YEAR = {1990},
    NUMBER = {2},
     PAGES = {131--148},
       DOI = {10.1016/0378-3758(90)90122-B},
}

@article {JGB15,
    AUTHOR = {Joseph, V. Roshan and Gul, Evren and Ba, Shan},
     TITLE = {Maximum projection designs for computer experiments},
   JOURNAL = {Biometrika},
  FJOURNAL = {Biometrika},
    VOLUME = {102},
      YEAR = {2015},
    NUMBER = {2},
     PAGES = {371--380},
       DOI = {10.1093/biomet/asv002},
}

@article {MM95,
    AUTHOR = {Morris, Max. D. and Mitchell, Toby J.},
     TITLE = {Exploratory designs for computational experiments},
   JOURNAL = {J. Statist. Plann. Inference},
  FJOURNAL = {Journal of Statistical Planning and Inference},
    VOLUME = {43},
      YEAR = {1995},
    NUMBER = {3},
     PAGES = {381--402},
       DOI = {10.1016/0378-3758(94)00035-T},
}

@article {PM12,
    AUTHOR = {Pronzato, L. and M\"uller, W.~G.},
     TITLE = {Design of computer experiments: space filling and beyond},
   JOURNAL = {Stat. Comput.},
  FJOURNAL = {Statistics and Computing},
    VOLUME = {22},
      YEAR = {2012},
    NUMBER = {3},
     PAGES = {681--701},
       DOI = {10.1007/s11222-011-9242-3}
}

@article {PZ23,
    AUTHOR = {Pronzato, L. and Zhigljavsky, A.},
     TITLE = {Quasi-uniform designs with optimal and near-optimal uniformity
              constant},
   JOURNAL = {J. Approx. Theory},
  FJOURNAL = {Journal of Approximation Theory},
    VOLUME = {294},
      YEAR = {2023},
      NOTE = {Paper No. 105931, 14~pp.},
       DOI = {10.1016/j.jat.2023.105931}
}

@article {RHVD10,
    AUTHOR = {Rennen, Gijs and Husslage, Bart and Van Dam, Edwin R. and Den
              Hertog, Dick},
     TITLE = {Nested maximin {L}atin hypercube designs},
   JOURNAL = {Struct. Multidiscip. Optim.},
  FJOURNAL = {Structural and Multidisciplinary Optimization},
    VOLUME = {41},
      YEAR = {2010},
    NUMBER = {3},
     PAGES = {371--395},
       DOI = {10.1007/s00158-009-0432-y},
}

@article {SG26,
    AUTHOR = {N. Sakai and T. Goda},
     TITLE = {Space-filling lattice designs for computer experiments},
   JOURNAL = {arXiv preprint arXiv:2602.15390},
   YEAR = {2026},
}

@book {SWN03,
    AUTHOR = {Santner, T.~J. and Williams, B.~J. and Notz, W.~I.},
     TITLE = {The Design and Analysis of Computer Experiments},
    SERIES = {Springer Series in Statistics},
 PUBLISHER = {Springer-Verlag, New York},
      YEAR = {2003},
     PAGES = {xii+283},
}

@article {SW06,
    AUTHOR = {Schaback, R. and Wendland, H.},
     TITLE = {Kernel techniques: {F}rom machine learning to meshless methods},
   JOURNAL = {Acta Numer.},
  FJOURNAL = {Acta Numerica},
    VOLUME = {15},
      YEAR = {2006},
     PAGES = {543--639},
       DOI = {10.1017/S0962492906270016}
}

@book {W05,
    AUTHOR = {Wendland, H.},
     TITLE = {Scattered Data Approximation},
    SERIES = {Cambridge Monographs on Applied and Computational Mathematics},
    VOLUME = {17},
 PUBLISHER = {Cambridge University Press, Cambridge},
      YEAR = {2005},
}

\end{document}